\documentclass[letterpaper,11 pt]{article}

\usepackage{amsmath, amssymb, bbm, xspace}

\def\spose#1{\hbox to 0pt{#1\hss}}

\def\text #1{\hbox{\quad#1\quad}}

\def\nthinsp{\mskip -2   mu}

\def\superstar{^{\raise 0.5pt\hbox{$\nthinsp *$}}}
\def\SUPERSTAR{^{\raise 0.5pt\hbox{$*$}}}

\def\lamstarT {\lambda^{\raise 0.5pt\hbox{$\nthinsp *$}T}}

\def\hbar{\skew{4.2}\bar h}

		\def\bkE{{\rm I\kern-.17em E}}
		\def\bk1{{\rm 1\kern-.17em l}}
		\def\bkD{{\rm I\kern-.17em D}}
		\def\bkR{{\rm I\kern-.17em R}}
		\def\bkP{{\rm I\kern-.17em P}}
		\def\bkY{{\bf \kern-.17em Y}}
		\def\bkZ{{\bf \kern-.17em Z}}

		\def\beq{\begin{eqnarray}}
		\def\bc{\begin{center}}
		\def\be{\begin{enumerate}}
		\def\bi{\begin{itemize}}
		\def\bs{\begin{small}}
		\def\bS{\begin{slide}}
		\def\ec{\end{center}}
		\def\ee{\end{enumerate}}
		\def\ei{\end{itemize}}
		\def\es{\end{small}}
		\def\eS{\end{slide}}
		\def\eeq{\end{eqnarray}}
		\def\qed{\quad \vrule height7.5pt width4.17pt depth0pt}

	\def\cp2problem#1#2#3#4{\fbox
		 {\begin{tabular*}{0.9\textwidth}
			{@{}l@{\extracolsep{\fill}}l@{\extracolsep{6pt}}l@{\extracolsep{\fill}}c@{}}
				#1 & & $#4 $ 
			\end{tabular*}}}

		\renewcommand{\emph}[1]{\textbf{#1}}

		\def\bkE{{\rm I\kern-.17em E}}
		\def\bk1{{\rm 1\kern-.17em l}}
		\def\bkD{{\rm I\kern-.17em D}}
		\def\bkR{\mathbb{R}}
		\def\bkP{{\rm I\kern-.17em P}}
		
		\def\bkZ{{\bf{Z}}}

\newcommand {\beeq}[1]{\begin{equation}\label{#1}}
\newcommand {\eeeq}{\end{equation}}
\newcommand {\bea}{\begin{eqnarray}}
\newcommand {\eea}{\end{eqnarray}}

\def\texitem#1{\par\smallskip\noindent\hangindent 25pt
               \hbox to 25pt {\hss #1 ~}\ignorespaces}

\usepackage{graphicx}
\usepackage{amsfonts,amsmath,fullpage,bbm}
\usepackage{amssymb,amsthm,multirow,verbatim}
\usepackage{acronym,wrapfig,plain,mathrsfs,enumerate,relsize,color}
\newtheorem{algorithm}{Algorithm}
\usepackage{algorithm}
\usepackage{algorithm,algorithmic}
\newtheorem{theorem}{Theorem}
\newtheorem{remark}{Remark}
\newtheorem{corollary}{Corollary}
\newtheorem{lemma}{Lemma}

\newtheorem{proposition}{Proposition}
\newtheorem{assumption}{Assumption}
\newtheorem{definition}{Definition}
\usepackage{subfig}
\usepackage{algorithmic}
\usepackage{pifont}
\usepackage{footmisc}
\usepackage{stackengine}

\usepackage[pdftex,colorlinks=true,urlcolor=blue,citecolor=black,anchorcolor=black,linkcolor=black]{hyperref}

\usepackage{algorithmic}

\newcommand{\zi}[1]{{\color{black}#1}}

\usepackage[normalem]{ulem}

\begin{document}

\begin{center}
  {\Large \textbf{Regularized Projection Algorithms for Monotone Inverse Variational Inequalities}}\\[4pt]
  {\large Griffin Smith, Zeinab Alizadeh, Afrooz Jalilzadeh}
\end{center}
\vspace{0.5em}

\section*{Abstract}
Stochastic inverse variational inequalities (SIVIs) arise in applications in which equilibrium responses are observed under uncertainty, such as inverse road pricing and network equilibrium control. Existing methods typically rely on co-coercivity or strong monotonicity, while general monotone SIVIs remain less understood. We propose a regularized projection algorithm that combines Tikhonov regularization with increasing batch sizes. Under monotonicity and Lipschitz continuity, we prove almost sure boundedness of the iterates and almost sure convergence of their distance to the SIVI solution set. We further establish, to the best of our knowledge, the first explicit nonasymptotic rate of $O(T^{-1/2})$ for the expected squared residual under general monotonicity. This yields $O(\epsilon^{-2})$ iterations and $O(\epsilon^{-4-2\delta})$ stochastic oracle calls, for any $\delta>0$, to obtain an $\epsilon$-solution in expected squared residual. A deterministic variant attains the same iteration complexity using $O(\epsilon^{-2})$ exact operator evaluations. Numerical experiments illustrate the proposed methods on monotone SIVI problems.

\section{Introduction}

Interest in Inverse Variational Inequalities (IVIs) has grown significantly in recent decades \cite{he1999goldstein}, driven by their broad applicability across diverse domains such as transportation system operation, the formulation of control policies, and electrical power network management \cite{yang1997traffic,he2011inverse,scrimali2012inverse,he2018existence}. 
In this paper, we focus on Stochastic Inverse Variational Inequalities (SIVI). In particular, let $X\subseteq \mathbb R^n$ be a nonempty closed convex set, and $F:\mathbb R^n \to \mathbb R^n$ be a continuous nonlinear map. In the SIVI problem, we seek  $x^* \in \mathbb R^n$ such that $F(x^*) \in X$ and
\begin{align}\label{sIvI}\tag{SIVI}
\langle y-F(x^*), x^* \rangle\geq 0, \quad \forall y\in X,
\end{align}
where $F(x)\triangleq \mathbb E[G(x,\xi)]$, $\xi: \Omega \to \mathbb R^d$, ${G}: \mathbb R^n \times \mathbb R^d  \rightarrow
\mathbb{R}^n$, and the associated probability space is denoted by $(\Omega, {\cal F}, \mathbb{P})$. The SIVI framework seeks a solution $x^*$ that satisfies a specific system of inequalities across the entire feasible set $X$. When $F^{-1}$ is single-valued and well defined on $X$, the  SIVI is mathematically equivalent to a Stochastic Variational Inequality (SVI) defined by $H \triangleq F^{-1}$; however, the inverse mapping is often unavailable or computationally expensive to evaluate, necessitating the development of ``inverse-free" iterative schemes. While VIs \cite{YuriiNesterov2011DiscreteandContinuousDynamicalSystems,facchinei2007finite,malitsky2015projected,alizadeh2026variance} and SVIs \cite{jiang2008stochastic,koshal2012regularized,yousefian2017smoothing,jalilzadeh2019proximal,alizadeh2024randomized}  have been studied extensively, the study of IVIs and SIVIs is still underdeveloped. 
In the deterministic setting, approximation methods have been developed for IVIs under assumptions such as strong monotonicity and co-coercivity. These approaches include proximal point-based algorithms, projection-type methods, alternating contraction projection methods, and dynamical system methods \cite{vuong2021global,dey2024dynamical,xu2021notes,chen2020tikhonov,luo2014regularization}. Specifically, projection-based algorithms were pioneered by \cite{he2011inverse,he2018existence,luo2014regularization}, and \cite{zou2016novel}  introduced neural network-based methods to approximate solutions.
To ensure stability in deterministic and monotone settings without stronger structural assumptions such as strong monotonicity or co-coercivity, Anh and Hai \cite{anh2024regularized} extended the regularized iterative framework of \cite{luo2014tikhonov} by proposing a regularized dynamical system for monotone IVIs in Hilbert spaces. Their approach admits a unique strong global solution and establishes strong convergence under relatively mild assumptions. However, despite these stability and asymptotic convergence guarantees, explicit nonasymptotic convergence rate results remain unavailable.
In the stochastic domain, research is even more sparse. In \cite{alizadeh2023projection}, a variance-controlled projection-based algorithm was developed specifically for SIVIs when the operator is co-coercive. They established almost sure convergence of the generated iterates to the solution and provided a convergence rate guarantee of $\mathcal{O}(1/T)$, which stands as a benchmark for the co-coercive class of problems. However, co-coercivity is a restrictive assumption that may not hold for general monotone mappings. Despite these advancements, an explicit convergence rate for the general monotone SIVIs remains unavailable in the current literature to the best of our knowledge.

\subsection{Applications}\label{application}
IVIs arise in transportation, network economics, power systems, and other equilibrium-control problems, where the goal is to choose a control vector whose induced equilibrium response satisfies prescribed constraints. In stochastic settings, this response is represented by the mean map $F(x):=\mathbb{E}[G(x,\xi)]$. We next present two representative SIVI formulations.

\textbf{Inverse Road Pricing Problem:}
Road pricing is a classical traffic-management tool used to regulate congestion and achieve operational or environmental targets \cite{yang1997traffic}. In a standard traffic-equilibrium problem, the toll vector is fixed and the corresponding user-equilibrium flow is determined according to Wardrop's principle. In contrast, an inverse road-pricing problem seeks a toll vector that induces a desirable equilibrium flow pattern. This interpretation is consistent with the black-box IVI framework in \cite{he2011inverse}, in which the toll-to-flow response can be computed approximately even when its inverse is unavailable.

Consider a network with $n$ controlled links, and let $x\in\mathbb{R}^n$ denote the vector of link tolls. For a given toll vector $x$, let $a^*(x)\in\mathbb{R}^n$ denote the induced equilibrium link-flow vector. The set of admissible flow patterns may be defined as $X:=\{a\in\mathbb{R}^n\mid \underline a\leq Ca\leq \bar a\}$, where $C\in\mathbb{R}^{q\times n}$ is a selection or aggregation matrix and $\underline a,\bar a\in\mathbb{R}^q$ specify flow, capacity, or environmental bounds. The stochastic inverse road-pricing problem can then be modeled as finding $x^*\in\mathbb{R}^n$ such that $F(x^*)\in X$ and $\langle y-F(x^*),x^*\rangle\geq 0,$ for all $y\in X$.
Here $G(x,\xi)$ denotes the stochastic equilibrium-flow response under toll vector $x$, where $\xi$ may represent random demand, incidents, weather, or measurement errors. For additive zero-mean measurement noise, one may write $G(x,\xi)=a^*(x)+\xi$, in which case $F(x)=a^*(x)$. Since the toll-to-flow mapping is generally available only through observations, simulations, or equilibrium computations, this formulation motivates stochastic inverse-free methods.

\textbf{Resource-Constrained Network Equilibrium:}
Consider a commodity network with $m$ supply markets, $n$ demand markets, and $mn$ shipment routes. Let $a\in\mathbb{R}^{mn}_+$ denote the shipment vector and let $x=[x_1^\top,x_2^\top]^\top\in\mathbb{R}^{m+n}$ denote a policy vector, where $x_1\in\mathbb{R}^m$ and $x_2\in\mathbb{R}^n$ represent supply- and demand-side tax adjustments. For a fixed policy $x$, the equilibrium shipment vector $a^*(x)$ is obtained from the parametric VI
\begin{align}\label{eq:mapping_app}
\left\langle a'-a,\,
\ell(a)+A^\top\bigl(g(Aa)+\alpha+x_1\bigr)
-B^\top\bigl(h(Ba)-\beta-x_2\bigr)
\right\rangle\geq 0 ,
\end{align}
for all $a'\in\mathbb{R}^{mn}_+$, where $A:=I_m\otimes e_n^\top\in\mathbb{R}^{m\times mn}$ and $B:=e_m^\top\otimes I_n\in\mathbb{R}^{n\times mn}$ aggregate shipments into supply and demand vectors. The mappings $\ell$, $g$, and $h$ represent transportation costs, supply prices, and demand prices, respectively, while $\alpha$ and $\beta$ denote baseline policy parameters.
Define the induced aggregate response by
$F(x):=[(Aa^*(x))^\top,(Ba^*(x))^\top]^\top\in\mathbb{R}^{m+n}$, and let $X:=\{u\in\mathbb{R}^{m+n}\mid Lu\leq b\}$ denote the set of admissible aggregate supply-demand states. Following the constrained black-box IVI framework of \cite{he2010solving}, the regulator seeks $x^*\in\mathbb{R}^{m+n}$ such that $F(x^*)\in X$ and $\langle v-F(x^*),x^*\rangle\geq 0,$ for all $v\in X$.
Under uncertainty, the regulator observes a stochastic response $G(x,\xi)$ satisfying $\mathbb{E}[G(x,\xi)]=F(x)$. For example, under additive zero-mean observation noise,
$G(x,\xi):=[(Aa^*(x)+\xi_1)^\top,(Ba^*(x)+\xi_2)^\top]^\top$, where $\xi_1\in\mathbb{R}^m$ and $\xi_2\in\mathbb{R}^n$. Such network models provide a natural application of the proposed method whenever the resulting mean response is monotone and Lipschitz continuous but is not assumed to be co-coercive \cite{nagurney2013network}.

These examples involve controlling an equilibrium response observed through noisy data, simulations, or equilibrium computations. Since the inverse map may be unavailable and the mean response may be monotone but not co-coercive or strongly monotone, they motivate methods with guarantees for monotone SIVIs.

\subsection{Contribution}
Motivated by the lack of explicit convergence-rate guarantees for stochastic IVIs under general monotonicity, we propose the Regularized Variance-Controlled Inverse Projected Gradient (RVC-IPG) method, which combines Tikhonov regularization with increasing batch sizes to handle monotonicity and stochastic error without requiring co-coercivity or strong monotonicity.
The closest related works are \cite{anh2024regularized} and \cite{alizadeh2023projection}. Anh and Hai \cite{anh2024regularized} study deterministic monotone IVIs and establish asymptotic convergence, but no explicit nonasymptotic rate is provided. Our previous work \cite{alizadeh2023projection} proves an $\mathcal O(1/T)$ rate for stochastic IVIs under co-coercivity; in contrast, the present paper removes co-coercivity and establishes an explicit $\mathcal O(T^{-1/2})$ residual rate under general monotonicity. Our contributions are threefold:
\begin{enumerate}
\item[(i)] We develop an almost sure convergence theory for monotone SIVIs. We show that each regularized SIVI has a unique solution, the regularization path is bounded, and its limit points solve the SIVI. Building on these properties, we prove that the RVC-IPG iterates are almost surely bounded and their distance to the SIVI solution set converges to zero almost surely.

\item[(ii)] We establish an explicit nonasymptotic residual bound. For suitable choices of the stepsize, regularization, projection parameter, and batch size, we prove that $\min_{0\le k\le T-1}\mathbb E[|H(x_k,\eta_k)|^2]=O(T^{-1/2})$, where $H$ is the natural residual map associated with the inverse projection condition. This yields an $O(\epsilon^{-2})$ iteration complexity and an $O(\epsilon^{-4-2\delta})$ stochastic oracle complexity, for any $\delta>0$, to obtain an $\epsilon$-solution in expected squared residual. To the best of our knowledge, this is the first explicit nonasymptotic rate for stochastic inverse variational inequalities under general monotonicity, without co-coercivity or strong monotonicity of the mean operator.

\item[(iii)] We connect the rate result to practical residual evaluation and deterministic computation. We show that the same $O(T^{-1/2})$ rate holds for a fixed projection parameter, enabling consistent comparison across iterations in numerical experiments. We also analyze a deterministic counterpart of RVC-IPG and prove that it attains the same $O(\epsilon^{-2})$ iteration complexity using exact evaluations of the mean operator. Numerical experiments on SIVI instances that are monotone but not co-coercive illustrate the behavior of the proposed methods and support the theoretical findings.
\end{enumerate}


\section{Preliminaries}\label{Preliminaries}
In this section, we first introduce notation and then state the main assumptions used in the convergence analysis.

{\bf Notation.}  Throughout the paper, $\|x\|$ denotes the Euclidean vector norm, i.e., $\|x\|=\sqrt{x^Tx}$.  $\mathbf{P}_X [u]$ is the projection of $u$ onto the set $X$, i.e. $\mathbf{P}_X [u] = \mbox{argmin}_{z \in X} \frac{1}{2}\| z-u \|^2$. $\mathbb E[x]$ is used to denote the expectation of a random variable $x$. For $\rho>0$, $B_\rho(x)$ denotes the closed ball of radius $\rho$ centered at $x$, i.e.,
$B_\rho(x)\triangleq \{z\in\mathbb R^n:\|z-x\|\le \rho\}$.

\begin{definition}
     An operator $A: X \rightarrow \mathbb{R}^n$ is said to be 
   
\noindent (i) $\mu$-strongly monotone if and only if there exists a constant $\mu > 0$ such that
$\langle A(x) - A(y), x - y\rangle \geq \mu \|x - y\|^2$ for all $x, y \in X.$

\noindent (ii)  monotone if and only if $
\langle A(x) - A(y), x - y\rangle \geq 0$ for all $x, y \in X.$

\noindent (iii) $L$-Lipschitz continuous if and only if  there exists a constant $L > 0$ such that 
$\|A(x) - A(y)\| \leq L \|x - y\|,$ for all $x, y \in X.$

\end{definition}

Define $\mathcal F_{k}$ denotes the information history, i.e., $\mathcal F_k := \sigma\{x_0,\xi_{j,s}:0\le s\le k-1;1\le j\le N_s\},$ at epoch $k$. We impose the following assumption on the stochastic error.
\begin{align}\label{def:w}
\bar w_{k,N_k} \triangleq \tfrac{1}{N_k}{\sum_{j=1}^{N_k} ( G(x_k,\xi_{j,k})-F(x_k))}
\end{align}
\begin{assumption}\label{assump_error}
 There exists $\nu>0$ such that $\mathbb E[\bar w_{k,N_{k}}\mid  \mathcal F_{k}]=0$ and $\mathbb E[\| \bar w_{k,N_{k}}\|^2\mid  \mathcal F_{k}]\leq \tfrac{\nu^2}{N_{k}}$  holds almost surely 
 for all $k$.  
\end{assumption} 

In our analysis, the following technical lemmas are used.
\begin{lemma}\label{proj}
\noindent Let $ X\subseteq \mathbb{R}^n $  be a nonempty closed and convex set. Then, 
(i) $\|\mathbf{P}_X [u]- \mathbf{P}_X [v]\| \leq \|u-v\| $ for all $ u,v \in \mathbb{R}^n$;
(ii) $ (u-\mathbf{P}_X [u])^T(x-\mathbf{P} _X [u]) \leq 0 $ for all $u \in \mathbb{R}^n$ and $x \in X$.
\end{lemma}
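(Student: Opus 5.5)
Both parts come from the variational characterization of the Euclidean projection, so I will prove (ii) first and then obtain (i) from it.

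\textbf{Part (ii).} Fix $u\in\mathbb R^n$ and write $p=\mathbf P_X[u]$. Since $X$ is nonempty, closed and convex, and $z\mapsto \tfrac12\|z-u\|^2$ is strongly convex and coercive, the minimizer $p$ exists and is unique. For any $x\in X$ and $t\in(0,1]$, convexity gives $p+t(x-p)\in X$. Minimality of $p$ then yields
$$\tfrac12\|p+t(x-p)-u\|^2\ge \tfrac12\|p-u\|^2 .$$
Expanding the left-hand side, the inequality reduces to $t\langle p-u,x-p\rangle+\tfrac{t^2}{2}\|x-p\|^2\ge 0$. Dividing by $t>0$ and letting $t\downarrow 0$ gives $\langle p-u,x-p\rangle\ge0$, which is exactly $(u-p)^T(x-p)\le 0$.

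\textbf{Part (i).} Let $p=\mathbf P_X[u]$ and $q=\mathbf P_X[v]$, both in $X$.
\begin{itemize}
\item Apply (ii) at $u$ with test point $x=q$: $(u-p)^T(q-p)\le0$.
\item Apply (ii) at $v$ with test point $x=p$: $(v-q)^T(p-q)\le0$.
\end{itemize}
Adding the two inequalities gives
$$\big((u-v)-(p-q)\big)^T(q-p)\le 0, \quad\text{that is,}\quad \|p-q\|^2\le (u-v)^T(p-q).$$
By Cauchy--Schwarz, $\|p-q\|^2\le\|u-v\|\,\|p-q\|$. If $p=q$ the claim is trivial; otherwise divide by $\|p-q\|$ to get $\|p-q\|\le\|u-v\|$.

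\textbf{Main obstacle.} The only delicate point is the limiting argument in (ii). One must use convexity to keep the segment $p+t(x-p)$ inside $X$, and closedness to guarantee that the projection exists at all. Everything else is a short algebraic manipulation.
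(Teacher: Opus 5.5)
Your proof is correct. The paper states this lemma without proof as a standard property of the Euclidean projection, and your argument is the usual textbook proof: you derive the variational inequality in (ii) by perturbing along the segment $p+t(x-p)$, then add the two instances of (ii) and apply Cauchy--Schwarz. The intermediate inequality $\|p-q\|^2\le (u-v)^T(p-q)$ is firm nonexpansiveness, slightly stronger than (i), and the paper uses the same trick of summing two projection inequalities when it shows that $\|H(x,\eta)\|$ is monotone in $\eta$.
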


\begin{lemma}\label{IVIunqiue}
(\cite{he2018existence}, Theorem 3.2). 
Let \(X\subseteq \mathbb{R}^n\) be nonempty, closed, and convex, and suppose
that the mean operator \(F(x):=\mathbb{E}[G(x,\xi)]\) is well-defined,
Lipschitz continuous, and strongly monotone. Then the stochastic inverse
variational inequality \((\mathrm{SIVI})\) has a unique solution.
\end{lemma}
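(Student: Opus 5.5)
The plan is to reduce the SIVI to a fixed-point equation and then use a contraction argument. For any $\beta>0$, the projection characterization of Lemma~\ref{proj}(ii) shows that $x^*$ solves (SIVI) if and only if
\[
F(x^*)=\mathbf{P}_X\bigl[F(x^*)-\beta x^*\bigr].
\]
\textbf{Forward direction.} Suppose $x^*$ solves (SIVI). Set $u=F(x^*)-\beta x^*$. For every $y\in X$ we have $(u-F(x^*))^T(y-F(x^*))=-\beta\langle x^*,y-F(x^*)\rangle\le 0$. Since $F(x^*)\in X$, this is exactly the characterization of $\mathbf{P}_X[u]$.

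\textbf{Converse.} Suppose the fixed-point equation holds. Then $F(x^*)\in X$ automatically, and Lemma~\ref{proj}(ii) gives the variational inequality. So it suffices to show that the residual equation $e(x):=F(x)-\mathbf{P}_X[F(x)-\beta x]=0$ has exactly one root.

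\textbf{Uniqueness.} This follows directly from strong monotonicity. Let $x_1,x_2$ both be solutions, and write $\mu$ for the strong monotonicity modulus. Apply the inequality for $x_1$ with $y=F(x_2)$, and the inequality for $x_2$ with $y=F(x_1)$. Adding the two gives $\langle F(x_2)-F(x_1),x_1-x_2\rangle\ge 0$. This means $\langle F(x_1)-F(x_2),x_1-x_2\rangle\le 0$, which forces $\mu\|x_1-x_2\|^2\le 0$, hence $x_1=x_2$.

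\textbf{Existence.} This is the main obstacle, because $F^{-1}$ is not available explicitly. I will still use it implicitly. By strong monotonicity and Lipschitz continuity, $F$ is a bijection of $\mathbb R^n$: it is injective, and it is surjective by the Browder--Minty theorem, since a continuous strongly monotone map is coercive. Moreover $H:=F^{-1}$ is $(1/\mu)$-Lipschitz and $(\mu/L^2)$-strongly monotone. Indeed, with $u=F(x)$ and $v=F(y)$,
\[
\langle H(u)-H(v),u-v\rangle\ge \mu\|x-y\|^2\ge \tfrac{\mu}{L^2}\|u-v\|^2 .
\]
Substituting $u=F(x)$, the problem becomes the standard VI: find $u\in X$ with $\langle y-u,H(u)\rangle\ge 0$ for all $y\in X$. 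This is equivalent to $u=\mathbf{P}_X[u-\gamma H(u)]$.

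To solve it, I consider $T(u)=\mathbf{P}_X[u-\gamma H(u)]$. By Lemma~\ref{proj}(i),
\[
\|T(u)-T(v)\|^2\le\Bigl(1-\tfrac{2\gamma\mu}{L^2}+\tfrac{\gamma^2}{\mu^2}\Bigr)\|u-v\|^2 .
\]
Choosing $\gamma$ small makes the factor less than one, so $T$ is a contraction. Banach's fixed point theorem then yields $u^*\in X$ with $u^*=T(u^*)$. Setting $x^*=H(u^*)$ gives $F(x^*)=u^*\in X$ and the required variational inequality.

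\textbf{Remaining checks.} I need to confirm the surjectivity of $F$, for example via the Browder--Minty theorem or a degree argument. I also need the contraction constant computation, where the cross term is handled using strong monotonicity of $H$. Since the lemma is cited from \cite{he2018existence}, these standard facts are what the argument rests on.
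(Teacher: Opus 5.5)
The paper gives no proof of this lemma; it imports the result from \cite{he2018existence}. Your argument is a correct, self-contained proof along what is essentially the standard route for this result. Uniqueness follows directly from strong monotonicity. For existence, you pass to $H=F^{-1}$, which is $\tfrac{1}{\mu}$-Lipschitz and $\tfrac{\mu}{L^2}$-strongly monotone, and solve the resulting VI in $u=F(x)$ with the projected contraction, which is valid for $0<\gamma<2\mu^3/L^2$.

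The one item you left open, surjectivity of $F$, needs neither Browder--Minty nor a degree argument. For any $b\in\mathbb{R}^n$, the map $x\mapsto x-\gamma(F(x)-b)$ has contraction factor $\sqrt{1-2\gamma\mu+\gamma^2L^2}<1$ whenever $0<\gamma<2\mu/L^2$. Banach's theorem then gives a unique solution of $F(x)=b$, using only the Lipschitz and strong monotonicity assumptions already in the statement.
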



\begin{lemma}\label{MVT}
    Let $h(t): [0, \infty) \to \mathbb{R}$ be a continuously differentiable, strictly decreasing, and convex function. Define the sequence $h_k = h(k)$ for $k \in \mathbb{N}$. Then, for every $k$, there exists a $\zeta \in (k, k+1)$ such that
    $\left| h_k - h_{k+1} \right|^2 = \left| h'(\zeta) \right|^2$.
    Furthermore, if $|h'(t)|$ is non-increasing, then
    $\left| h_k - h_{k+1} \right|^2 \leq \left| h'(k) \right|^2$.
\end{lemma}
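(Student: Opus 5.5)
The plan is to apply the mean value theorem to $h$ on $[k,k+1]$ and then use the monotonicity of $|h'|$. Since $h$ is continuously differentiable on $[0,\infty)$, it is continuous on $[k,k+1]$ and differentiable on $(k,k+1)$. Hence there is $\zeta\in(k,k+1)$ with
\begin{equation*}
h_{k+1}-h_k = h(k+1)-h(k) = h'(\zeta)\,\bigl((k+1)-k\bigr) = h'(\zeta).
\end{equation*}
Taking absolute values and squaring gives $|h_k-h_{k+1}|^2=|h'(\zeta)|^2$, which is the first claim. Strict decrease and convexity are not needed for this part. They only tell us that $h'\le 0$, so $|h'(t)|=-h'(t)$.

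For the second claim, I assume $|h'(t)|$ is non-increasing in $t$. Since $\zeta>k$, this gives $|h'(\zeta)|\le |h'(k)|$. Both sides are nonnegative, so squaring preserves the inequality, and
\begin{equation*}
|h_k-h_{k+1}|^2=|h'(\zeta)|^2\le |h'(k)|^2 .
\end{equation*}
As a remark, the extra hypothesis is automatic under the stated assumptions. Convexity makes $h'$ non-decreasing, and $h'\le 0$, so $|h'|=-h'$ is non-increasing. This also yields the bound directly via $0\le h_k-h_{k+1}=-\int_k^{k+1}h'(t)\,dt\le -h'(k)$.

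No step is a real obstacle. The only point needing care is the direction of monotonicity: the comparison point must be the left endpoint $k$, because $\zeta>k$. Using $k+1$ instead would reverse the inequality. Including the endpoint $t=0$ when $k=0$ causes no difficulty, since $h$ is $C^1$ on $[0,\infty)$.
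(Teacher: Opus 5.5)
Your proof is correct and follows essentially the same route as the paper: the mean value theorem on $[k,k+1]$, then $|h'(\zeta)|\le |h'(k)|$ from $|h'|$ being non-increasing. Your remark that convexity and strict decrease already make $|h'|=-h'$ non-increasing is exactly how the paper justifies the second claim.
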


\begin{proof}
    Since $h(t)$ is continuously differentiable on the interval $[k, k+1]$, the mean value theorem asserts the existence of a point $\zeta \in (k, k+1)$ such that:
    $h(k+1) - h(k) = h'(\zeta)((k+1) - k).$
    Taking the absolute value of both sides and squaring yields
    $|h_k - h_{k+1}|^2 = |h'(\zeta)|^2.$
    Since $h(t)$ is strictly decreasing and convex, it follows that $h'(t)$ is a non-decreasing function. Consequently, $|h'(t)|$ is non-increasing for $t \geq 0$. Thus, for any $\zeta$ in the interval $[k, k+1]$,
    $|h_k - h_{k+1}|^2 = |h'(\zeta)|^2 \leq \max_{t \in [k, k+1]} |h'(t)|^2 = |h'(k)|^2.$ 
\end{proof}
We will also use the following standard almost-supermartingale convergence result to establish the almost sure stability and asymptotic consistency of the stochastic iterates.
\begin{lemma}[Robbins--Siegmund \cite{robbins1971convergence}]
\label{lem:RS}
Let $\{v_k\}$, $\{u_k\}$, $\{\alpha_k\}$, and $\{\beta_k\}$ be nonnegative
random variables adapted to the filtration $\{\mathcal F_k\}$. Suppose that
$\mathbb{E}[v_{k+1}\mid \mathcal F_k]
\leq (1+\alpha_k)v_k-u_k+\beta_k$ almost surely for all  $k\geq 0,$
and that
$\sum_{k=0}^{\infty}\alpha_k<\infty,$
$\sum_{k=0}^{\infty}\beta_k<\infty$ almost surely.
Then $\{v_k\}$ converges almost surely to a nonnegative random variable $v$, and
$\sum_{k=0}^{\infty}u_k<\infty$
almost surely.
\end{lemma}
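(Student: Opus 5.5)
This is the standard Robbins--Siegmund almost-supermartingale convergence theorem, which the paper cites and does not re-prove. A self-contained proof would go as follows.

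\textbf{Step 1: normalize to remove the multiplicative factor.} Set $\pi_k\triangleq\prod_{j<k}(1+\alpha_j)$, which is $\mathcal F_{k-1}$-measurable and nondecreasing. Since $\sum_k\alpha_k<\infty$ a.s. and $1+\alpha_j\le e^{\alpha_j}$, the limit $\pi_\infty\triangleq\lim_k\pi_k$ is finite a.s., and $\pi_k\ge 1$. Define
\[
v_k'\triangleq v_k/\pi_k,\qquad u_k'\triangleq u_k/\pi_{k+1},\qquad \beta_k'\triangleq \beta_k/\pi_{k+1}.
\]
Dividing the hypothesis by $\pi_{k+1}=(1+\alpha_k)\pi_k$ gives
\[
\mathbb E[v_{k+1}'\mid\mathcal F_k]\le v_k'-u_k'+\beta_k'.
\]
Moreover $\sum_k\beta_k'\le\sum_k\beta_k<\infty$ a.s.

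\textbf{Step 2: build a supermartingale.} Let
\[
Y_k\triangleq v_k'+\sum_{j<k}u_j'-\sum_{j<k}\beta_j'.
\]
Step 1 gives $\mathbb E[Y_{k+1}\mid\mathcal F_k]\le Y_k$, so $Y_k$ is a supermartingale. It is bounded below by $-\sum_j\beta_j'$, which is finite a.s. but not a deterministic constant. This is the main obstacle: the martingale convergence theorem needs a uniform bound or integrability, so we localize.

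\textbf{Step 3: localize.} For $a>0$, define the stopping time
\[
\tau_a\triangleq\inf\Bigl\{k:\ \sum_{j\le k}\beta_j'>a\Bigr\}.
\]
This is a stopping time because $\beta_k'$ is $\mathcal F_k$-measurable. The stopped process $Y_{k\wedge\tau_a}$ is still a supermartingale. Before $\tau_a$ the partial sums $\sum_{j<k}\beta_j'$ are at most $a$, so $Y_{k\wedge\tau_a}\ge -a$. A supermartingale bounded below converges a.s. to a finite limit. Hence $Y_k$ converges a.s. on $\{\tau_a=\infty\}$. Since $\sum_j\beta_j'<\infty$ a.s., we have $\bigcup_{a\in\mathbb N}\{\tau_a=\infty\}$ of full probability, so $Y_k$ converges a.s.

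\textbf{Step 4: conclude.} Since $\sum_{j<k}\beta_j'$ converges a.s., $v_k'+\sum_{j<k}u_j'$ converges a.s. to a finite limit. Both terms are nonnegative and the second is nondecreasing, so $\sum_j u_j'<\infty$ a.s. Then $v_k'$ converges to a finite nonnegative limit. Multiplying back by $\pi_k\to\pi_\infty<\infty$ shows that $v_k$ converges a.s. to a finite nonnegative $v$. Finally $u_j\le\pi_\infty u_j'$ yields $\sum_j u_j<\infty$ a.s.
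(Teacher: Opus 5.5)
Your argument is correct and is the classical proof from the cited Robbins--Siegmund paper: normalize by $\pi_k=\prod_{j<k}(1+\alpha_j)$, form the compensated process $Y_k$, and localize with $\tau_a$. The paper gives no proof of its own and simply cites that reference. The one detail to tighten is integrability: $Y_{k\wedge\tau_a}+a$ is a nonnegative supermartingale only in the extended conditional-expectation sense, so before applying Doob's convergence theorem either note that $\mathbb E[Y_{k\wedge\tau_a}+a]\le \mathbb E[v_0]+a<\infty$ when $v_0$ is integrable, or otherwise also localize on the events $\{v_0\le c\}\in\mathcal F_0$.
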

The next lemma establishes the closedness of the SIVI solution set, a property used later in the convergence analysis.
\begin{lemma} \label{closedSol}
Let $X \subseteq \mathbb{R}^n$ be closed and $F: \mathbb{R}^n \to \mathbb{R}^n$ be continuous. 
Then the SIVI solution set
$S \triangleq \{x \in \mathbb{R}^n : F(x) \in X, \langle y - F(x),x \rangle \geq 0,\ \forall y \in X\}$
is closed.
\end{lemma}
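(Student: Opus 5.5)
We argue by sequences. Take a sequence $\{x_k\}\subseteq S$ with $x_k\to \bar x$. It suffices to show that $\bar x\in S$, i.e., that $F(\bar x)\in X$ and $\langle y-F(\bar x),\bar x\rangle\ge 0$ for every $y\in X$.

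First, $F(\bar x)\in X$. Since $F$ is continuous, $F(x_k)\to F(\bar x)$. Each $F(x_k)$ lies in $X$, and $X$ is closed, so the limit $F(\bar x)$ also lies in $X$.

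Second, the variational inequality passes to the limit. Fix an arbitrary $y\in X$ and consider the function $\phi_y(x)\triangleq\langle y-F(x),x\rangle$. It is continuous in $x$, because $F$ is continuous and the inner product is jointly continuous; concretely,
\begin{align*}
|\phi_y(x_k)-\phi_y(\bar x)|\le \|y-F(x_k)\|\,\|x_k-\bar x\|+\|F(x_k)-F(\bar x)\|\,\|\bar x\|,
\end{align*}
where the first factor $\|y-F(x_k)\|$ stays bounded because $F(x_k)$ converges. Hence $\phi_y(x_k)\to\phi_y(\bar x)$. Since $\phi_y(x_k)\ge 0$ for all $k$, we get $\phi_y(\bar x)\ge0$. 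As $y\in X$ was arbitrary, $\bar x\in S$.

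An equivalent way to phrase the argument is $S=F^{-1}(X)\cap\bigcap_{y\in X}\phi_y^{-1}([0,\infty))$. This is an intersection of preimages of closed sets under continuous maps, and any intersection of closed sets is closed.

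Neither step is delicate. The only point needing care is that the bound above uses boundedness of the convergent sequence $F(x_k)$. Convexity of $X$ is not needed for this lemma.
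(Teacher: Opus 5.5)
Your proof is correct and follows essentially the same route as the paper: take a convergent sequence in $S$, use continuity of $F$ and closedness of $X$ to get $F(\bar x)\in X$, then pass to the limit in $\langle y-F(x_k),x_k\rangle\ge 0$ for each fixed $y\in X$. Your explicit continuity estimate and the equivalent description of $S$ as an intersection of preimages of closed sets add rigor but do not change the argument.
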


\begin{proof}
Let $\{x_k\}$ be a sequence in $S$ that converges to $\bar x$. 
Since each $x_k \in S$, we have $F(x_k) \in X$ for all $k \in \mathbb{N}$. 
Since $F$ is continuous, $F(x_k)$ converges to $F(\bar x)$; because $X$ is closed, $F(\bar x) \in X$. 
Moreover, for any $y \in X$, $\langle y - F(x_k), x_k \rangle \geq 0$.
    Taking the limit as $k \to \infty$ and noting the continuity of $F$ and the inner product gives
    $\langle y - F(\bar x), \bar x \rangle \geq 0.$
    As this inequality holds for all $y \in X$ and $F(\bar x) \in X$ it follows that $\bar x \in S$.
    Therefore every convergent sequence in $S$ has its limit in $S$, and $S$ is closed.
\end{proof}

\section{Proposed Method}\label{proposed}
In this section, we present the Regularized Variance-Controlled Inverse Projected Gradient (RVC-IPG) method for solving the monotone \eqref{sIvI} problem. First, we show that solving  \eqref{sIvI}  is equivalent to finding $x^*\in \mathbb{R}^n$ such that
\begin{equation}\label{opt-cond}
    F(x^*)=\mathbf{P}_X(F(x^*)-\eta x^*),\quad \text{where}\quad \eta>0.
\end{equation} 
\begin{proposition}
    $x^*$ is a solution of problem \eqref{sIvI} if and only if $F(x^*)=\mathbf{P}_X(F(x^*)-\eta x^*)$.   
\end{proposition}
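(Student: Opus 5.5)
The plan is to use the variational characterization of the projection from Lemma \ref{proj}(ii), together with its converse: for $u\in\mathbb R^n$ and $z\in X$, $z=\mathbf{P}_X[u]$ if and only if $\langle u-z, y-z\rangle\le 0$ for all $y\in X$. This converse follows from strict convexity of $\tfrac12\|\cdot-u\|^2$ on the convex set $X$: if the inequality holds, then for every $y\in X$,
\begin{equation*}
\|y-u\|^2=\|y-z\|^2+\|z-u\|^2+2\langle y-z, z-u\rangle\ge \|z-u\|^2 ,
\end{equation*}
so $z$ is the unique minimizer. We will apply this with $u=F(x^*)-\eta x^*$ and $z=F(x^*)$, so that $u-z=-\eta x^*$.

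For the forward direction, suppose $x^*$ solves \eqref{sIvI}. Then $F(x^*)\in X$, and for all $y\in X$ we have $\langle y-F(x^*),x^*\rangle\ge 0$. Multiplying by $-\eta<0$ gives $\langle (F(x^*)-\eta x^*)-F(x^*),\, y-F(x^*)\rangle\le 0$ for all $y\in X$. By the characterization above, $F(x^*)=\mathbf{P}_X(F(x^*)-\eta x^*)$.

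For the converse, suppose $F(x^*)=\mathbf{P}_X(F(x^*)-\eta x^*)$. Since the projection lands in $X$, we get $F(x^*)\in X$. Lemma \ref{proj}(ii) with $u=F(x^*)-\eta x^*$ gives $\langle -\eta x^*, y-F(x^*)\rangle\le 0$ for every $y\in X$. Dividing by $-\eta$ then yields $\langle y-F(x^*),x^*\rangle\ge 0$, which is \eqref{sIvI}.

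The only step that is not immediate is the sufficiency half of the projection characterization used in the forward direction, since Lemma \ref{proj} states only necessity. It is handled by the short expansion displayed above. Nothing else, such as monotonicity or Lipschitz continuity of $F$, is required, and the equivalence holds for every $\eta>0$.
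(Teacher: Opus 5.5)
Your proof is correct and follows the same route as the paper: both reduce the projection equation to the variational characterization $\langle (F(x^*)-\eta x^*)-F(x^*),\,y-F(x^*)\rangle\le 0$ for all $y\in X$ (with $F(x^*)\in X$) and then divide by $-\eta$. The only difference is that you verify the sufficiency of this first-order condition explicitly through the norm expansion, whereas the paper simply appeals to convexity of $\|y-(F(x^*)-\eta x^*)\|^2$ to identify first-order optimality with global minimality.
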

\begin{proof}
    We start by rewriting the projection equation \eqref{opt-cond}, which is equivalent to 
    $F(x^*)\in \mbox{argmin}_{y\in X}\|y-(F(x^*)-\eta x^*)\|^2$. We observe that $F(x^*)\in X$ and, since the objective function is convex, the first-order optimality condition is equivalent to finding a global solution. Hence,
    $$ \langle y-F(x^*),F(x^*)-(F(x^*)-\eta x^*) \rangle \geq 0
    \quad \text{for all } y\in X.$$
    Since $F(x^*)-(F(x^*)-\eta x^*)=\eta x^*$ and $\eta>0$, this is equivalent to
    $$\langle y-F(x^*), x^* \rangle \geq 0
    \quad \text{for all } y\in X,$$
    with $F(x^*)\in X$, which is exactly problem \eqref{sIvI}.
    
\end{proof}
Our approach is built upon two core principles. To address the ill-posedness of the monotone SIVI, we introduce a Tikhonov-type regularization. Let $\Phi: \mathbb{R}^n \to \mathbb{R}^n$ be a $\mu$-strongly monotone and  Lipschitz continuous operator. For a given regularization parameter $\lambda > 0$, we define the regularized operator as
$F_{\lambda}(x) \triangleq F(x) + \lambda \Phi(x).$
Second, to manage the stochastic uncertainty, we employ a variance-controlled strategy through a variable batch-size scheme. Rather than using a fixed number of samples, we utilize a non-decreasing sequence $\{N_k\}$ that increases the precision of the operator estimate as the iterates progress. Specifically, at each iteration $k$, we approximate the deterministic regularized operator by using the stochastic estimator $\frac{\sum_{j=1}^{N_k} G(x_k, \xi_{j,k})}{N_k}  + \lambda_k \Phi(x_k)$. 
The proposed Regularized Variance-Controlled Inverse Projected Gradient (RVC-IPG) method is displayed in Algorithm \ref{alg1}. Moreover, to measure how far the iterates are from the optimal solution,
we can examine the degree to which the optimality condition in \eqref{opt-cond} is violated. To this end, we define the residual map $H: \mathbb R^n \times \mathbb{R}_{++}\to \mathbb{R}^n$, such that $ H(x, \eta) \triangleq   F(x) - \mathbf{P}_X(F(x) - \eta x),
$ which will be used to analyze the convergence rate of the proposed method.

\begin{algorithm}[H]\normalsize
\caption{Regularized Variance-Controlled Inverse Projected Gradient (RVC-IPG) method}
\label{alg1}
\begin{algorithmic}
\STATE {\bf Input}: $x_0 \in \mathbb R^n$, $\{\eta_{k}, \theta_k, \lambda_k\}_{k\geq 0} \subseteq \mathbb{R}_+$ and $\{N_k\}_{k\ge0}\subseteq \mathbb N$;
\FOR{$k=0, \dots, T-1$}
\STATE  $z_k = \mathbf{P}_{X}\left[ \frac{\sum_{j=1}^{N_k} G(x_k, \xi_{j,k})}{N_k}  + \lambda_k \Phi(x_k) - \eta_{k} x_k\right]$
\STATE  $x_{k+1} = x_k - \theta_k \left( \frac{\sum_{j=1}^{N_k} G(x_k, \xi_{j,k})}{N_k}  + \lambda_k \Phi(x_k) - z_k \right)$
\ENDFOR
\end{algorithmic}
\end{algorithm}\vspace{-1cm}
\section{Convergence Analysis}\label{conv}
To obtain the convergence results of Algorithm \ref{alg1} we first establish the following assumptions and technical lemmas. 
\begin{assumption}\label{assum:problem}
Consider problem \eqref{sIvI}. Let the following hold.\\
    \noindent {(i)} $F:\mathbb R^n\to\mathbb R^n$ is monotone and $L_1$-Lipschitz continuous, \\
     \noindent {(ii)} $\Phi:\mathbb R^n\to\mathbb R^n$ is $\mu$-strongly monotone,
and $L_2$-Lipschitz continuous.\\
\noindent{(iii)} The solution set is nonempty, i.e., $S\triangleq\{x\in\mathbb{R}^n: F(x)\in X,\langle y-F(x),x\rangle\ge0\ \forall y\in X\}\ne\emptyset.$
\end{assumption}
For any mapping $A:\mathbb R^n\to\mathbb R^n$ and any nonempty closed convex set $X\subseteq\mathbb R^n$, we denote by $\mathrm{SIVI}(A,X)$ the inverse variational inequality of finding $x\in\mathbb R^n$ such that $A(x)\in X$ and
$\langle y-A(x),x\rangle\ge 0$ for all $y\in X$.
\begin{remark}\label{rem:regularized_sol}
For any regularization parameter $\lambda > 0$, denote $F_\lambda \triangleq F + \lambda \Phi$. Given that $F$ is monotone and $\Phi$ is $\mu$-strongly monotone, it follows that $F_\lambda$ is $\lambda\mu$-strongly monotone and $(L_1 + \lambda L_2)$-Lipschitz continuous. Consequently, Lemma \ref{IVIunqiue} guarantees that the regularized inverse variational inequality $\mathrm{SIVI}(F_\lambda, X)$ possesses a unique solution, which we denote by $x_\lambda$.
\end{remark}

The following regularization-path estimates are inspired by the deterministic
regularization analysis of Anh and Hai~\cite{anh2024regularized}, where a strongly monotone
operator is used to regularize monotone IVIs. Here we adapt these estimates
to the SIVI setting and use them to analyze a stochastic
projection method with explicit nonasymptotic residual bounds.

\begin{lemma}\label{M bound}
 The following statements hold. 
 \noindent (i) The sequence $\{x_{\lambda_k}\}_{k\geq 1}$ is bounded. 
 \noindent (ii) There exists an $M > 0$ such that for all $\lambda_k,\lambda_l > 0$ we have
$        \| x_{\lambda_k} - x_{\lambda_l} \| \leq M \tfrac{|\lambda_k - \lambda_l|}{\lambda_l}.$ 
\end{lemma}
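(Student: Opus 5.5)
Both parts will follow from the variational inequalities satisfied by the regularized solutions, tested against a true solution (for (i)) and against each other (for (ii)). The inequalities are combined so that the $F$-terms become monotonicity expressions.

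For (i), fix $x^*\in S$ (nonempty by Assumption~\ref{assum:problem}(iii)) and write $x_\lambda$ for the unique solution of $\mathrm{SIVI}(F_\lambda,X)$ from Remark~\ref{rem:regularized_sol}. Since $F(x^*)\in X$ and $F_\lambda(x_\lambda)\in X$, we may take $y=F(x^*)$ in the inequality for $x_\lambda$ and $y=F_\lambda(x_\lambda)$ in the inequality for $x^*$:
\[
\langle F(x^*)-F(x_\lambda)-\lambda\Phi(x_\lambda),x_\lambda\rangle\ge 0,\qquad
\langle F(x_\lambda)+\lambda\Phi(x_\lambda)-F(x^*),x^*\rangle\ge 0 .
\]
Adding the two gives $\langle F(x^*)-F(x_\lambda),x_\lambda-x^*\rangle\ge\lambda\langle\Phi(x_\lambda),x_\lambda-x^*\rangle$. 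By monotonicity of $F$ the left side is $\le 0$. Dividing by $\lambda>0$ gives $\langle\Phi(x_\lambda),x_\lambda-x^*\rangle\le 0$. Strong monotonicity of $\Phi$ then yields
\[
\mu\|x_\lambda-x^*\|^2\le\langle\Phi(x_\lambda)-\Phi(x^*),x_\lambda-x^*\rangle\le -\langle\Phi(x^*),x_\lambda-x^*\rangle\le\|\Phi(x^*)\|\,\|x_\lambda-x^*\|,
\]
so $\|x_\lambda-x^*\|\le\|\Phi(x^*)\|/\mu$ for every $\lambda>0$. This bound is independent of $\lambda$, which gives (i) (indeed for the whole path, not only along $\lambda_k$).

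For (ii), repeat the same cross-testing with $x_{\lambda_k}$ and $x_{\lambda_l}$. Take $y=F_{\lambda_l}(x_{\lambda_l})$ in the inequality for $x_{\lambda_k}$ and $y=F_{\lambda_k}(x_{\lambda_k})$ in the one for $x_{\lambda_l}$, and add:
\[
\langle F_{\lambda_k}(x_{\lambda_k})-F_{\lambda_l}(x_{\lambda_l}),\,x_{\lambda_l}-x_{\lambda_k}\rangle\ge 0 .
\]
Write $F_{\lambda_k}(x_{\lambda_k})=F_{\lambda_l}(x_{\lambda_k})+(\lambda_k-\lambda_l)\Phi(x_{\lambda_k})$. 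The strong monotonicity of $F_{\lambda_l}$ with modulus $\lambda_l\mu$ then gives
\[
\lambda_l\mu\|x_{\lambda_k}-x_{\lambda_l}\|^2\le(\lambda_k-\lambda_l)\langle\Phi(x_{\lambda_k}),x_{\lambda_l}-x_{\lambda_k}\rangle\le|\lambda_k-\lambda_l|\,\|\Phi(x_{\lambda_k})\|\,\|x_{\lambda_k}-x_{\lambda_l}\| .
\]
Hence $\|x_{\lambda_k}-x_{\lambda_l}\|\le \|\Phi(x_{\lambda_k})\|\,|\lambda_k-\lambda_l|/(\mu\lambda_l)$.

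The remaining step is to bound $\|\Phi(x_{\lambda_k})\|$ uniformly. This follows from (i) and Lipschitz continuity of $\Phi$:
\[
\|\Phi(x_\lambda)\|\le\|\Phi(x^*)\|+L_2\|x_\lambda-x^*\|\le\|\Phi(x^*)\|(1+L_2/\mu).
\]
So $M\triangleq\|\Phi(x^*)\|(1+L_2/\mu)/\mu$ works; if $\Phi(x^*)=0$, take any $M>0$, since then every $x_\lambda=x^*$. The only delicate point is choosing the test points so that the $F$ terms combine into a monotonicity expression; both $F(x^*)$ and $F_\lambda(x_\lambda)$ lie in $X$ by definition of the solutions, so these choices are admissible. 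The rest is routine.
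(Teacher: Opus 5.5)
Your proof is correct and follows essentially the same route as the paper: the same cross-testing of the variational inequalities ($y=F(x^*)$ and $y=F_\lambda(x_\lambda)$ for (i); $y=F_{\lambda_l}(x_{\lambda_l})$ and $y=F_{\lambda_k}(x_{\lambda_k})$ for (ii)), followed by strong monotonicity and Cauchy--Schwarz. The differences are cosmetic. In (i) you split the strong monotonicity of $F_\lambda$ into monotonicity of $F$ plus strong monotonicity of $\Phi$. In (ii) you give an explicit $\lambda$-uniform constant $M=\|\Phi(x^*)\|(1+L_2/\mu)/\mu$, whereas the paper takes $M=\sup_k\|\Phi(x_{\lambda_k})\|/\mu$.
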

\begin{proof}
    (i) Let $x^* \in S$. Since $x_\lambda$ is the unique solution of the $\lambda \mu$-strongly monotone $\mathrm{SIVI}(F_\lambda,X)$, for all $y \in X$ we have 
    \begin{align*}
        \lambda\mu\|x^*-x_\lambda\|^2 &\leq \langle F_\lambda(x^*)-F_\lambda(x_\lambda), x^*-x_\lambda \rangle \\ &= \langle F_\lambda(x^*)-y, x^*-x_\lambda \rangle + \langle y-F_\lambda(x_\lambda), x^* \rangle - \langle y-F_\lambda(x_\lambda), x_\lambda \rangle\\ &\leq \langle F_\lambda(x^*)-y, x^*-x_\lambda \rangle + \langle y-F_\lambda(x_\lambda), x^* \rangle , 
    \end{align*}
    where the last inequality is obtained because $x_\lambda$ is the solution of the $\mathrm{SIVI}(F_\lambda,X)$.
   Setting $y = F(x^*)$ yields
\begin{align*}
     \lambda \mu \|x^* - x_\lambda\|^2  \leq \lambda \langle \Phi (x^*),x^* - x_\lambda \rangle + \langle F(x^*) - F_\lambda (x_\lambda), x^* \rangle . 
\end{align*}
Note that by definition $F_\lambda(x_\lambda) \in X$ and taking $y = F_\lambda(x_\lambda)$ in the original $\mathrm{SIVI}(F,X)$ problem yields $\langle F_\lambda(x_\lambda) - F(x^*),x^* \rangle \geq 0$.
Thus $\langle F(x^*) - F_\lambda (x_\lambda), {x^*}\rangle\leq 0$ and using Cauchy-Schwarz, we obtain
    \begin{align}\label{bound phi}
         \mu \|x^* - x_\lambda\|^2 &\leq \langle \Phi (x^*), {x^* - x_\lambda}\rangle  \leq \|{\Phi (x^*)} \| \|x^* - x_\lambda\|,
    \end{align}
    which gives 
$   \|x^* - x_\lambda\| \leq  \tfrac{1}{\mu} \|\Phi (x^*)\|$.
Thus, for all $\lambda>0$, $x_\lambda \in B_\rho(x^*)$, where $\rho = \tfrac{1}{\mu}\|\Phi(x^*)\|$. In particular, the sequence $\{x_{\lambda_k}\}_{k\geq 1}$ is bounded.   

(ii) Using the definition of $x_{\lambda_k}$ we have
$$    \langle{F_{\lambda_{l}}(x_{\lambda_{l}}) - F_{\lambda_{k}}(x_{\lambda_k})},{x_{\lambda_k}} \rangle\geq 0,
    \quad \text{and} \quad
    \langle{F_{\lambda_{k}}(x_{\lambda_{k}}) - F_{\lambda_{l}}(x_{\lambda_{l}})},{x_{\lambda_{l}}}\rangle \geq 0.$$
    Adding the above inequalities leads to
    \[
  0 \leq  \langle{F_{\lambda_{l}} (x_{\lambda_{l}}) - F_{\lambda_{l}} (x_{\lambda_{k}})},{x_{\lambda_{k}} - x_{\lambda_{l}}} \rangle+ \langle{F_{\lambda_{l}} (x_{\lambda_{k}}) - F_{\lambda_{k}} (x_{\lambda_{k}})},{x_{\lambda_{k}} - x_{\lambda_{l}}} \rangle. 
    \]
   Rearranging terms and noting that $F_{\lambda_{l}}$ is ${\lambda_{l}} \mu$ strongly monotone, one can obtain
    \begin{align*}
        \lambda_l \mu \|{x_{\lambda_k} - x_{\lambda_l}}\|^2 &\leq (\lambda_l - \lambda_k) \langle{\Phi(x_{\lambda_k})},{x_{\lambda_k} - x_{\lambda_l}} \rangle\leq |\lambda_l - \lambda_k| \|\Phi(x_{\lambda_k})\| \|x_{\lambda_k} - x_{\lambda_l}\|,
    \end{align*}
   where the second inequality is achieved by using the Cauchy-Schwarz inequality. Now, dividing both sides by $\lambda_l \mu\|{x_{\lambda_k} - x_{\lambda_l}}\|$ one can obtain
    \[
    \|{x_{\lambda_{k}} - x_{\lambda_{l}}} \|\leq \frac{\|\Phi (x_{\lambda_{k}})\|}{\mu} \frac{|{\lambda_{l}} - {\lambda_{k}}|}{{\lambda_{l}}}.
    \]
    As $\{x_{\lambda_{k}}\}$ is bounded and $\Phi$ is Lipschitz continuous we know $\Phi( x_{\lambda_{k}})$ is also bounded.
    Let $M=\sup_{k\geq 1}\frac{\|\Phi(x_{\lambda_k})\|}{\mu}$, which is finitie. This yields the desired result 
$    \|x_{\lambda_{k}} - x_{\lambda_{l}}\| \leq M \frac{|{\lambda_{l}} - {\lambda_{k}}|}{{\lambda_{l}}}$. 
\end{proof}

\begin{lemma}\label{bound of inner pro}
    Let $x_{\lambda_k}$ be the solution to the regularized inverse variational inequality $\mathrm{SIVI}(F_{\lambda_k}, X)$. Suppose Assumption \ref{assum:problem} holds. Then for $\bar z_k = \mathbf{P}_{X}\left[ F_{\lambda_k}(x_k) -\eta_{k} x_k\right] $ and $\eta_k>\frac{(L_1+\lambda_kL_2)^2}{2\lambda_k\mu}$ we have:
    \begin{align*}
    \langle  x_k -x_{\lambda_{k}}, \bar z_k- F_{\lambda_k}(x_k)\rangle \leq &-(\lambda_k \mu - \tfrac{(L_1+\lambda_kL_2)^2}{2\eta_k} )\| x_k -x_{\lambda_{k}}\|^2 \\ &-\tfrac{1}{2\eta_k}\|\bar z_k- F_{\lambda_k}(x_k)\|^2.
\end{align*}
\end{lemma}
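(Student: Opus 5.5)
The plan is to combine three ingredients: the variational characterization of the projection defining $\bar z_k$ (Lemma~\ref{proj}(ii)), the fact that $x_{\lambda_k}$ solves $\mathrm{SIVI}(F_{\lambda_k},X)$, and the strong monotonicity and Lipschitz continuity of $F_{\lambda_k}$ from Remark~\ref{rem:regularized_sol}.

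Fix $k$ and abbreviate
\[
u\triangleq F_{\lambda_k}(x_k),\qquad \bar x\triangleq x_{\lambda_k},\qquad d\triangleq u-\bar z_k,\qquad e\triangleq x_k-\bar x,\qquad L\triangleq L_1+\lambda_kL_2 .
\]
With this notation the claim reads $-\langle e,d\rangle\le -(\lambda_k\mu-\tfrac{L^2}{2\eta_k})\|e\|^2-\tfrac1{2\eta_k}\|d\|^2$. So I will aim for the lower bound
\[
\langle e,d\rangle\ge\Big(\lambda_k\mu-\tfrac{L^2}{2\eta_k}\Big)\|e\|^2+\tfrac{1}{2\eta_k}\|d\|^2 .
\]

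\textbf{Step 1 (two basic inequalities).} Since $\bar z_k=\mathbf P_X[u-\eta_kx_k]$ and $F_{\lambda_k}(\bar x)\in X$, Lemma~\ref{proj}(ii) with the test point $F_{\lambda_k}(\bar x)$ gives
\[
\langle u-\eta_kx_k-\bar z_k,\;F_{\lambda_k}(\bar x)-\bar z_k\rangle\le0 .
\]
Since $\bar z_k\in X$ and $\bar x$ solves $\mathrm{SIVI}(F_{\lambda_k},X)$, testing with $y=\bar z_k$ and multiplying by $\eta_k>0$ gives
\[
\eta_k\langle \bar x,\;F_{\lambda_k}(\bar x)-\bar z_k\rangle\le 0 .
\]

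\textbf{Step 2 (combine and split).} Adding the two inequalities yields
\[
\langle d,\,F_{\lambda_k}(\bar x)-\bar z_k\rangle-\eta_k\langle e,\,F_{\lambda_k}(\bar x)-\bar z_k\rangle\le0 .
\]
Now split $F_{\lambda_k}(\bar x)-\bar z_k=(F_{\lambda_k}(\bar x)-u)+d$. This turns the inequality into
\[
\|d\|^2+\langle d,F_{\lambda_k}(\bar x)-u\rangle+\eta_k\langle e,u-F_{\lambda_k}(\bar x)\rangle\le \eta_k\langle e,d\rangle .
\]

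\textbf{Step 3 (bound the cross terms).} By $\lambda_k\mu$-strong monotonicity of $F_{\lambda_k}$, $\langle e,u-F_{\lambda_k}(\bar x)\rangle\ge\lambda_k\mu\|e\|^2$. By Cauchy--Schwarz, $L$-Lipschitz continuity, and Young's inequality,
\[
\langle d,F_{\lambda_k}(\bar x)-u\rangle\ge-L\|d\|\|e\|\ge-\tfrac12\|d\|^2-\tfrac{L^2}{2}\|e\|^2 .
\]
Substituting both bounds gives
\[
\eta_k\langle e,d\rangle\ge\Big(\eta_k\lambda_k\mu-\tfrac{L^2}{2}\Big)\|e\|^2+\tfrac12\|d\|^2 .
\]
Dividing by $\eta_k$ gives the target lower bound, which is exactly the claimed inequality.

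The condition $\eta_k>L^2/(2\lambda_k\mu)$ is not needed for the inequality itself. It only makes the coefficient of $\|x_k-x_{\lambda_k}\|^2$ negative, which is what later contraction arguments use.

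\textbf{Main obstacle.} The delicate point is choosing the test points so that the signs align: $F_{\lambda_k}(x_{\lambda_k})$ in the projection inequality, and $\bar z_k$ in the SIVI inequality for $x_{\lambda_k}$. It is also essential to scale the latter by $\eta_k$, so that the terms $\eta_k x_k$ and $\eta_k x_{\lambda_k}$ combine into $\eta_k e$. Once that combination is written down, the rest is a routine Young-inequality balancing.
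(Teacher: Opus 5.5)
Your proof is correct and follows essentially the same route as the paper: the projection inequality tested at $F_{\lambda_k}(x_{\lambda_k})$, the SIVI inequality for $x_{\lambda_k}$ tested at $\bar z_k$ and scaled by $\eta_k$, adding and subtracting $F_{\lambda_k}(x_k)$, then strong monotonicity plus Cauchy--Schwarz/Lipschitz/Young, and finally dividing by $\eta_k$. Your remark that the condition $\eta_k>\frac{(L_1+\lambda_kL_2)^2}{2\lambda_k\mu}$ is not needed for the inequality itself is also accurate, since the paper's argument never uses it either.
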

\begin{proof}
    Since $\bar z_k = \mathbf{P}_{X}\left[ F_{\lambda_k}(x_k) -\eta_{k} x_k\right] $ by Lemma \ref{proj} for all $y \in X$, 
    \begin{align*}
        \langle y- \bar z_k,  F_{\lambda_k}(x_k) -\eta_{k} x_k - \bar z_k\rangle \leq 0 
    \end{align*}
    Taking $y = F_{\lambda_k}(x_{\lambda_k})  \in X$, we deduce that
     \begin{align}\label{proj use}
        \langle \bar z_k-F_{\lambda_k}(x_{\lambda_k}) ,  F_{\lambda_k}(x_k) -\eta_{k} x_k - \bar z_k\rangle \geq 0 
    \end{align}
    On the other hand, since $x_{\lambda_k}$ is a solution of  $\mathrm{SIVI}(F_{\lambda_k}, X)$, we have the following: 
    \begin{align}\label{IVI use def}
        \langle \bar z_k - F_{\lambda_k}(x_{\lambda_k}), \eta_k x_{\lambda_k}\rangle \geq 0  
    \end{align}
Adding \eqref{proj use} and \eqref{IVI use def} leads to 
\begin{align}\label{same 18, 19}
        \langle \bar z_k-F_{\lambda_k}(x_{\lambda_k}) ,  F_{\lambda_k}(x_k) -\eta_{k}( x_k-x_{\lambda_k}) -\bar z_k\rangle \geq 0. 
    \end{align}
By adding and subtracting $F_{\lambda_k}(x_k)$ 
and rearranging terms yields: 
\begin{align*}
       \eta_{k}\langle \bar z_k-F_{\lambda_k}(x_k) ,   x_k-x_{\lambda_k}\rangle &\leq -\eta_{k} \langle F_{\lambda_k}(x_k)-F_{\lambda_k}(x_{\lambda_k}) ,   x_k-x_{\lambda_k} \rangle \\ &\quad -\|\bar z_k-F_{\lambda_k}(x_k)\|^2\\ 
       &\quad +\langle F_{\lambda_k}(x_{k})- \bar z_k, F_{\lambda_k}(x_k)-F_{\lambda_k}(x_{\lambda_k})\rangle
       \\ &\leq  -\eta_k \lambda_k \mu \|x_k-x_{\lambda_k}\|^2 -\tfrac{1}{2}\|\bar z_k - F_{\lambda_k}(x_k)\|^2 \\ &\quad+\tfrac{1}{2}\|F_{\lambda_k}(x_k)-F_{\lambda_k}(x_{\lambda_k}) \|^2
       \end{align*}
where the second inequality follows from strong monotonicity of $F_{\lambda_k}$ and Young's inequality. Using Lipschitz continuity of $F_{\lambda_k}$ and dividing both sides by $\eta_k$, the desired result is achieved. 
\end{proof}

\begin{lemma}\label{exp bound}
     Let $\{x_k\}$ be the sequence generated by Algorithm \ref{alg1}. Suppose Assumptions \ref{assump_error}  and \ref{assum:problem} hold,
     and choose sequences $\lambda_k = (k+1)^{-p}$, $\eta_k = C_1(k+1)^{q}$, and $\theta_k = C_2(k+1)^{-r}$ where $C_1 = \frac{(L_1 + L_2)^2}{\mu}$, $C_2 = \frac{1}{8C_1}$, and $0 < p \leq q \leq r <1$. Then, for all $k\ge 0$, the following pathwise estimates hold:
     \begin{align}\label{equ for main th0}
\nonumber  \| x_{k+1} - x_{\lambda_{k+1}}\|^2 &\leq (1-\tfrac{1}{4}\theta_k\lambda_k\mu)\| x_{k} - x_{\lambda_{k}}\|^2 - \theta_k(\tfrac{1}{2\eta_k}-2\theta_k)\|\bar z_k- F_{\lambda_k}(x_k)\|^2 \\ \nonumber & \quad- 2\theta_k \langle  x_k -x_{\lambda_{k}},\bar w_{k,N_k} \rangle  + \tfrac{M^2|  \lambda_{k} - \lambda_{k+1}|^2}{\lambda_{k}^2} (3+2\theta_k \eta_k  + \tfrac{2}{\lambda_{k} \theta_k \mu})\\ &\quad+\theta_k(10\theta_k+\tfrac{4}{\lambda_k\mu})\|\bar w_{k,N_k}\|^2\\ \nonumber
&\leq \| x_{k} - x_{\lambda_{k}}\|^2 - \theta_k(\tfrac{1}{2\eta_k}-2\theta_k)\|\bar z_k- F_{\lambda_k}(x_k)\|^2 \\ \nonumber & \quad- 2\theta_k \langle  x_k -x_{\lambda_{k}},\bar w_{k,N_k} \rangle  + \tfrac{M^2|  \lambda_{k} - \lambda_{k+1}|^2}{\lambda_{k}^2} (3+2\theta_k \eta_k  + \tfrac{2}{\lambda_{k} \theta_k \mu})\\ &\quad+\theta_k(10\theta_k+\tfrac{4}{\lambda_k\mu})\|\bar w_{k,N_k}\|^2.\label{equ for main th}
\end{align} 
Consequently,
     \begin{align}\label{err bound}
   \nonumber \mathbb{E}[\|x_{k+1} - x_{\lambda_{k+1}}\|^2 \mid \mathcal{F}_k] &\leq \| x_{k} - x_{\lambda_{k}}\|^2 - \theta_k(\tfrac{1}{2\eta_k}-2\theta_k)\|\bar z_k- F_{\lambda_k}(x_k)\|^2 \\ \nonumber&\quad+ \tfrac{M^2|  \lambda_{k} - \lambda_{k+1}|^2}{\lambda_{k}^2} (3+2\theta_k \eta_k  + \tfrac{2}{\lambda_{k} \theta_k \mu}) \\ &\quad+\tfrac{\theta_k \nu^2}{N_k}(10\theta_k+\tfrac{4}{\lambda_k\mu}).
\end{align}
     
\end{lemma}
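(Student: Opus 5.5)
Write $\hat F_k \triangleq F_{\lambda_k}(x_k)+\bar w_{k,N_k}$ for the sampled regularized operator, so that $z_k=\mathbf P_X[\hat F_k-\eta_k x_k]$ and $x_{k+1}=x_k-\theta_k(\hat F_k-z_k)$. The plan is to compare the stochastic direction with its deterministic counterpart $\bar z_k-F_{\lambda_k}(x_k)$ from Lemma \ref{bound of inner pro}. First, by Lemma \ref{proj}(i), $\|z_k-\bar z_k\|\le\|\bar w_{k,N_k}\|$, hence
\[
\|(\hat F_k-z_k)-(F_{\lambda_k}(x_k)-\bar z_k)\|\le 2\|\bar w_{k,N_k}\|.
\]

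\textbf{Step 1 (one-step recursion toward $x_{\lambda_k}$).} Expand $\|x_{k+1}-x_{\lambda_k}\|^2$. The cross term is
\[
2\theta_k\langle x_k-x_{\lambda_k},\,z_k-\hat F_k\rangle
=2\theta_k\langle x_k-x_{\lambda_k},\,\bar z_k-F_{\lambda_k}(x_k)\rangle
+2\theta_k\langle x_k-x_{\lambda_k},\,(z_k-\bar z_k)-\bar w_{k,N_k}\rangle .
\]
Lemma \ref{bound of inner pro} applies to the first piece; its hypothesis holds because $\eta_k\lambda_k\mu\ge C_1\mu=(L_1+L_2)^2\ge(L_1+\lambda_kL_2)^2$ when $q\ge p$ and $\lambda_k\le1$. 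This gives
\[
-2\theta_k\bigl(\lambda_k\mu-\tfrac{(L_1+\lambda_kL_2)^2}{2\eta_k}\bigr)\|x_k-x_{\lambda_k}\|^2\le-\theta_k\lambda_k\mu\|x_k-x_{\lambda_k}\|^2,
\]
together with $-\tfrac{\theta_k}{\eta_k}\|\bar z_k-F_{\lambda_k}(x_k)\|^2$. In the second piece, keep $-2\theta_k\langle x_k-x_{\lambda_k},\bar w_{k,N_k}\rangle$ exactly, since it has zero conditional mean. Bound the part involving $z_k-\bar z_k$, which is not zero mean, by Young's inequality:
\[
2\theta_k\|x_k-x_{\lambda_k}\|\,\|\bar w_{k,N_k}\|\le\tfrac{\theta_k\lambda_k\mu}{2}\|x_k-x_{\lambda_k}\|^2+\tfrac{2\theta_k}{\lambda_k\mu}\|\bar w_{k,N_k}\|^2 .
\]
The quadratic term satisfies $\theta_k^2\|\hat F_k-z_k\|^2\le2\theta_k^2\|\bar z_k-F_{\lambda_k}(x_k)\|^2+8\theta_k^2\|\bar w_{k,N_k}\|^2$. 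Combining these, and using $\theta_k\eta_k\le C_2C_1=1/8$ (valid since $r\ge q$) to absorb the $2\theta_k^2$ term partly into $-\tfrac{\theta_k}{\eta_k}$, yields a bound of the shape
\[
(1-\tfrac12\theta_k\lambda_k\mu)\|x_k-x_{\lambda_k}\|^2-\theta_k(\tfrac1{\eta_k}-2\theta_k)\|\bar z_k-F_{\lambda_k}(x_k)\|^2-2\theta_k\langle x_k-x_{\lambda_k},\bar w_{k,N_k}\rangle+c\,\theta_k(\theta_k+\tfrac1{\lambda_k\mu})\|\bar w_{k,N_k}\|^2 .
\]

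\textbf{Step 2 (shifting the anchor, the main obstacle).} Write $x_{k+1}-x_{\lambda_{k+1}}=(x_{k+1}-x_{\lambda_k})+(x_{\lambda_k}-x_{\lambda_{k+1}})$ and bound $\|x_{\lambda_k}-x_{\lambda_{k+1}}\|\le M|\lambda_k-\lambda_{k+1}|/\lambda_k$ by Lemma \ref{M bound}(ii). Squaring produces the cross term $2\langle x_{k+1}-x_{\lambda_k},\,x_{\lambda_k}-x_{\lambda_{k+1}}\rangle$. A crude Young split $(1+a)$ would multiply the noise term and the inner product, which is unwanted. So I will substitute $x_{k+1}-x_{\lambda_k}=(x_k-x_{\lambda_k})-\theta_k(\hat F_k-z_k)$ in this cross term, which gives three pieces:
\begin{itemize}
\item the piece with $x_k-x_{\lambda_k}$, by Young: $\le\tfrac14\theta_k\lambda_k\mu\|x_k-x_{\lambda_k}\|^2+\tfrac{4}{\theta_k\lambda_k\mu}\|x_{\lambda_k}-x_{\lambda_{k+1}}\|^2$;
\item the piece with $\theta_k(\bar z_k-F_{\lambda_k}(x_k))$: $\le\tfrac{\theta_k}{2\eta_k}\|\bar z_k-F_{\lambda_k}(x_k)\|^2+2\theta_k\eta_k\|x_{\lambda_k}-x_{\lambda_{k+1}}\|^2$;
\item the piece with the noise: $\le\theta_k^2\|\cdot\|^2$ terms plus $\|x_{\lambda_k}-x_{\lambda_{k+1}}\|^2$.
\end{itemize}
Adding the plain $\|x_{\lambda_k}-x_{\lambda_{k+1}}\|^2$ reproduces the drift coefficient $(3+2\theta_k\eta_k+\tfrac{2}{\lambda_k\theta_k\mu})$ up to the constant choices in Young's inequality. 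The result is the contraction $1-\tfrac14\theta_k\lambda_k\mu$, the coefficient $\tfrac1{2\eta_k}-2\theta_k$, and the noise coefficient $10\theta_k+\tfrac4{\lambda_k\mu}$. I expect the constant bookkeeping here, choosing the Young weights so the numbers match exactly, to be the fiddly part; the structure itself is standard.

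\textbf{Step 3 (conclusions).} The second inequality \eqref{equ for main th} follows by dropping the factor $(1-\tfrac14\theta_k\lambda_k\mu)\le1$. To obtain \eqref{err bound}, take $\mathbb E[\cdot\mid\mathcal F_k]$. The quantities $x_k$, $x_{\lambda_k}$ and $\bar z_k$ are $\mathcal F_k$-measurable, so the inner product $-2\theta_k\langle x_k-x_{\lambda_k},\bar w_{k,N_k}\rangle$ has zero conditional mean by Assumption \ref{assump_error}. The same assumption gives $\mathbb E[\|\bar w_{k,N_k}\|^2\mid\mathcal F_k]\le\nu^2/N_k$, which yields \eqref{err bound}.
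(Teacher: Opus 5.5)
Your proposal is correct and is essentially the paper's argument in Appendix A. The paper expands $\|(x_k-x_{\lambda_k})+(x_{\lambda_k}-x_{\lambda_{k+1}})+\theta_k(z_k-\hat F_k)\|^2$ in one shot, but it bounds the same three cross terms with Lemma~\ref{bound of inner pro}, nonexpansiveness, and Young's inequality; to obtain the stated constants exactly, swap your two Young weights, putting $\tfrac{\theta_k\lambda_k\mu}{2}$ on $2\langle x_k-x_{\lambda_k},x_{\lambda_k}-x_{\lambda_{k+1}}\rangle$ (which gives $\tfrac{2}{\lambda_k\theta_k\mu}$) and $\tfrac{\theta_k\lambda_k\mu}{4}$ on the $z_k-\bar z_k$ term (which gives $\tfrac{4\theta_k}{\lambda_k\mu}$), while keeping the contraction $1-\tfrac14\theta_k\lambda_k\mu$.
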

\begin{proof}
The detailed proof is provided in Appendix A,
where \eqref{equ for main th0} and
\eqref{equ for main th} are derived using Lemma \ref{bound of inner pro}, projection
nonexpansiveness, and Young's inequality. Taking conditional expectation in
\eqref{equ for main th}, using Assumption \ref{assump_error}, gives
\eqref{err bound}.
\end{proof}
The following lemma demonstrates that the distance between our iterates and the regularization path is uniformly bounded.

\begin{lemma}\label{bound x_k}
    Let $\{x_k\}$ be the sequence generated by Algorithm \ref{alg1}. Suppose Assumptions \ref{assump_error}  and \ref{assum:problem} hold. Under the parameter choices $\lambda_k = (k+1)^{-p}$, $\eta_k = C_1(k+1)^{q}$, and $\theta_k = C_2(k+1)^{-r}$ where $C_1 = \frac{(L_1 + L_2)^2}{\mu}$, $C_2 = \frac{1}{8C_1}$, $0 < p \leq q \leq r <1$,  $p +r < 1 $  and $N_k = \lceil(k+1)^{1+\delta}\rceil$ for some $\delta > 0$,  then there exists $D > 0$ such that $\mathbb{E}[\|x_k - x_{\lambda_k}\|^2] \leq D^2$. 
\end{lemma}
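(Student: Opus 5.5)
Take total expectations in the conditional bound \eqref{err bound} of Lemma \ref{exp bound} and show that the resulting recursion for $a_k\triangleq\mathbb E[\|x_k-x_{\lambda_k}\|^2]$ has summable additive terms. Once $\theta_k\le 4\eta_k^{-1}$ is checked, the $\|\bar z_k-F_{\lambda_k}(x_k)\|^2$ term can be dropped. This needs $2\theta_k\le \frac{1}{2\eta_k}$, i.e.\ $4C_1C_2(k+1)^{q-r}\le 1$, which holds because $C_1C_2=1/8$ and $q\le r$. We then obtain
\[
a_{k+1}\le a_k+\beta_k,\qquad \beta_k\triangleq \tfrac{M^2|\lambda_k-\lambda_{k+1}|^2}{\lambda_k^2}\Bigl(3+2\theta_k\eta_k+\tfrac{2}{\lambda_k\theta_k\mu}\Bigr)+\tfrac{\theta_k\nu^2}{N_k}\Bigl(10\theta_k+\tfrac{4}{\lambda_k\mu}\Bigr).
\]
Telescoping gives $a_k\le a_0+\sum_{j\ge0}\beta_j$. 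Here $a_0=\|x_0-x_{\lambda_0}\|^2$ is deterministic and finite by Lemma \ref{M bound}(i), since $x_{\lambda_0}\in B_\rho(x^*)$. It therefore suffices to show $\sum_j\beta_j<\infty$, and we set $D^2=a_0+\sum_j\beta_j$.

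For the regularization-drift term we apply Lemma \ref{MVT} to $h(t)=(t+1)^{-p}$, which is smooth, strictly decreasing and convex with $|h'|$ nonincreasing. This gives $|\lambda_k-\lambda_{k+1}|^2\le p^2(k+1)^{-2p-2}$, and hence $|\lambda_k-\lambda_{k+1}|^2/\lambda_k^2\le p^2(k+1)^{-2}$. The bracket multiplying it is $3+2C_1C_2(k+1)^{q-r}+\frac{2}{C_2\mu}(k+1)^{p+r}=O((k+1)^{p+r})$, using $q\le r$. The product is therefore $O((k+1)^{p+r-2})$, which is summable because $p+r<1$; in fact $p+r<1$ is more than is needed at this step.

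For the noise term, $N_k\ge (k+1)^{1+\delta}$ gives $\frac{\theta_k}{N_k}\bigl(10\theta_k+\frac{4}{\lambda_k\mu}\bigr)\le \bigl(10C_2^2(k+1)^{-2r}+\frac{4C_2}{\mu}(k+1)^{p-r}\bigr)(k+1)^{-1-\delta}$. Since $p\le r$, the exponent $p-r$ is at most $0$, so this term is $O((k+1)^{-1-\delta})$ and summable.

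The main obstacle is bookkeeping rather than anything conceptual. We must confirm that the constant $M$ from Lemma \ref{M bound}(ii) is uniform; it is, since the bound there holds for all $\lambda_k,\lambda_l$. We must also confirm that the condition $\eta_k>\frac{(L_1+\lambda_kL_2)^2}{2\lambda_k\mu}$ required by Lemma \ref{bound of inner pro} is already built into Lemma \ref{exp bound}, which we take as given. If that contraction route were needed instead, the first inequality \eqref{equ for main th0} would give an even stronger bound. The weaker form \eqref{equ for main th}, plus summability of the additive terms, suffices here.
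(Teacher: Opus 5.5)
Your proof is correct and follows the paper's argument essentially step for step. You take total expectations in \eqref{err bound} and drop the residual term, using the same check $4\theta_k\eta_k=\tfrac12(k+1)^{q-r}\le\tfrac12$ that the paper makes explicit in Appendix B. You then show the additive terms are summable via Lemma \ref{MVT} and $N_k\ge(k+1)^{1+\delta}$, and telescope, with $D^2=a_0+\sum_j\beta_j$. There are two harmless slips: the condition for dropping the residual term should read $\theta_k\le(4\eta_k)^{-1}$, not $\theta_k\le 4\eta_k^{-1}$; and $p+r<1$ is not ``more than needed'' for the drift term, since that term is of exact order $(k+1)^{p+r-2}$, so $p+r<1$ is precisely the condition for it to be summable.
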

\begin{proof}
    Consider \eqref{err bound} in Lemma \ref{exp bound}. Define
    $$A_k\triangleq \big(\tfrac{M^2|  \lambda_{k} - \lambda_{k+1}|^2}{\lambda_{k}^2} (3+2\theta_k \eta_k  + \tfrac{2}{\lambda_{k} \theta_k \mu})+\tfrac{\theta_k \nu^2}{N_k}(10\theta_k+\tfrac{4}{\lambda_k\mu})\big)$$
    Substituting the sequences $\lambda_k = (k+1)^{-p}$, $\eta_k = C_1(k+1)^{q}$, and $\theta_k = C_2(k+1)^{-r}$, and applying Lemma \ref{MVT} with $h(k) = (k+1)^{-p}$ yield  
    \begin{align*}
        \sum_{k=0}^{\infty} A_k &\leq  \underbrace{\sum_{k=0}^{\infty} (Mp)^2(k+1)^{-2} \left(3 + \tfrac{(k+1)^{q-r}}{4} + \tfrac{2(k+1)^{p+r}}{\mu C_2}\right)}_{\text{term (a)}} \\ &\quad + \underbrace{\sum_{k=0}^{\infty} \tfrac{C_2 \nu^2 (k+1)^{-r}}{N_k} \left(10C_2(k+1)^{-r} +\tfrac{4(k+1)^{p}}{\mu}  \right)}_{\text{term (b)}} .
    \end{align*} 

   Since $q-r \leq 0$ and $p+r < 1$, then $-2+p+r < -1  $ and $-2+q-r < -2$, we conclude that term (a) is summable. Furthermore, consider $N_k = \lceil(k+1)^{1+\delta}\rceil$ for some $\delta >0$, since $0 < p \leq r$ it is easy to show that $-2r-1-\delta < -1$ and $-1+p-r-\delta< -1$ which leads term (b) also is summable, i.e., $\sum_{k=0}^{\infty} A_k < \infty.$
   Taking total expectation in Lemma \ref{exp bound} and dropping the nonpositive residual term yields $\mathbb E[\|x_{k+1}-x_{\lambda_{k+1}}\|^2]\leq \mathbb E[\|x_{k}-x_{\lambda_{k}}\|^2]+A_k$. Recursion gives:
   $$\mathbb E[\|x_k-x_{\lambda_k}\|^2]
\le
\mathbb E[\|x_0-x_{\lambda_0}\|^2]+\sum_{j=0}^{k-1}A_j
\le
\mathbb E[\|x_0-x_{\lambda_0}\|^2]+\sum_{j=0}^{\infty}A_j\triangleq D^2<\infty.$$ 
\end{proof} 
We first examine the limiting behavior of the regularized solutions
$\{x_{\lambda_k}\}$. Since the algorithm uses $\lambda_k=(k+1)^{-p}$ with
$p>0$, we have $\lambda_k\downarrow 0$. The following lemma shows that the
regularization path approaches the solution set of the original SIVI.
\begin{lemma}[Limit points of the regularization path]\label{lem:reg_path_limit}
Suppose Assumption~\ref{assum:problem} holds and let $\lambda_k\downarrow 0$. Then the sequence
$\{x_{\lambda_k}\}$ is bounded. Moreover, every limit point of
$\{x_{\lambda_k}\}$ belongs to the solution set $S$ of the original SIVI. Consequently, $\operatorname{dist}(x_{\lambda_k},S)\to 0.$
\end{lemma}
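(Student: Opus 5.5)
Boundedness is already contained in Lemma~\ref{M bound}(i): its proof shows that for every $\lambda>0$ and any fixed $x^*\in S$ we have $x_\lambda\in B_\rho(x^*)$ with $\rho=\|\Phi(x^*)\|/\mu$. This bound does not depend on $\lambda$, so $\{x_{\lambda_k}\}$ is bounded. The work is in identifying limit points and then deducing the distance statement.

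Let $\bar x$ be a limit point, so that $x_{\lambda_{k_j}}\to\bar x$ along some subsequence. We must show $F(\bar x)\in X$ and $\langle y-F(\bar x),\bar x\rangle\ge 0$ for all $y\in X$. By definition of $x_{\lambda}$ as the solution of $\mathrm{SIVI}(F_\lambda,X)$ (Remark~\ref{rem:regularized_sol}),
\[
F(x_{\lambda_{k_j}})+\lambda_{k_j}\Phi(x_{\lambda_{k_j}})\in X,\qquad \langle y-F(x_{\lambda_{k_j}})-\lambda_{k_j}\Phi(x_{\lambda_{k_j}}),\,x_{\lambda_{k_j}}\rangle\ge 0\quad\forall y\in X.
\]
Since $\Phi$ is Lipschitz, it is continuous. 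Because the subsequence lies in a bounded set, $\Phi(x_{\lambda_{k_j}})$ stays bounded, and so $\lambda_{k_j}\Phi(x_{\lambda_{k_j}})\to 0$ as $\lambda_{k_j}\to0$. Continuity of $F$ then gives $F_{\lambda_{k_j}}(x_{\lambda_{k_j}})\to F(\bar x)$. Closedness of $X$ yields $F(\bar x)\in X$. Passing to the limit in the inequality for each fixed $y$ yields $\langle y-F(\bar x),\bar x\rangle\ge 0$, so $\bar x\in S$. This is the same limiting argument as in Lemma~\ref{closedSol}, with the vanishing perturbation $\lambda\Phi$ added. Alternatively, one could use the projection characterization \eqref{opt-cond} together with nonexpansiveness of the projection, but the direct argument suffices.

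For the final claim, argue by contradiction. Suppose $\operatorname{dist}(x_{\lambda_k},S)\not\to 0$. Then there exist $\epsilon>0$ and a subsequence with $\operatorname{dist}(x_{\lambda_{k_j}},S)\ge\epsilon$. By boundedness and Bolzano--Weierstrass, a further subsequence converges to some $\bar x$, and by the previous step $\bar x\in S$. Since $\operatorname{dist}(\cdot,S)$ is $1$-Lipschitz (and $S$ is nonempty and closed by Assumption~\ref{assum:problem}(iii) and Lemma~\ref{closedSol}), $\operatorname{dist}(x_{\lambda_{k_j}},S)\le\|x_{\lambda_{k_j}}-\bar x\|\to0$, which is a contradiction.

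The only step needing care is the middle one: we must ensure that the perturbation $\lambda_k\Phi(x_{\lambda_k})$ actually vanishes. This relies on the uniform boundedness from Lemma~\ref{M bound}(i) and is otherwise routine.
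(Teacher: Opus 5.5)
Your proposal is correct and follows essentially the same route as the paper. Both take boundedness from Lemma~\ref{M bound}(i), use the vanishing perturbation $\lambda_{k_j}\Phi(x_{\lambda_{k_j}})\to 0$ together with continuity of $F$ and closedness of $X$ to pass to the limit, and obtain $\operatorname{dist}(x_{\lambda_k},S)\to 0$ by a subsequence/Bolzano--Weierstrass contradiction. Your remark that the ball bound is uniform over all $\lambda>0$ makes explicit why the lemma applies to an arbitrary $\lambda_k\downarrow 0$.
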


\begin{proof}
The boundedness of $\{x_{\lambda_k}\}$ follows from Lemma~\ref{M bound}. Let $\bar x$ be an
arbitrary limit point of $\{x_{\lambda_k}\}$. Then there exists a subsequence,
still denoted by $\{x_{\lambda_{k_j}}\}$, such that $x_{\lambda_{k_j}}\to \bar x$.
Since $x_{\lambda_{k_j}}$ solves the regularized SIVI associated with
$F_{\lambda_{k_j}}=F+\lambda_{k_j}\Phi$, we have
$F_{\lambda_{k_j}}(x_{\lambda_{k_j}})\in X$ and
\[
    \langle y-F_{\lambda_{k_j}}(x_{\lambda_{k_j}}),
    x_{\lambda_{k_j}}\rangle\geq 0,\qquad \forall y\in X.
\]
Since $\{x_{\lambda_{k_j}}\}$ is bounded and $\Phi$ is Lipschitz continuous,
the sequence $\{\Phi(x_{\lambda_{k_j}})\}$ is bounded. Hence
$\lambda_{k_j}\Phi(x_{\lambda_{k_j}})\to 0$. By continuity of $F$, we obtain
\[
    F_{\lambda_{k_j}}(x_{\lambda_{k_j}})
    =
    F(x_{\lambda_{k_j}})
    +
    \lambda_{k_j}\Phi(x_{\lambda_{k_j}})
    \to F(\bar x).
\]
Because $X$ is closed and
$F_{\lambda_{k_j}}(x_{\lambda_{k_j}})\in X$, it follows that $F(\bar x)\in X$.
Passing to the limit in the inequality above gives
\[
    \langle y-F(\bar x),\bar x\rangle\geq 0,\qquad \forall y\in X.
\]
Therefore, $\bar x\in S$.
It remains to show that $\operatorname{dist}(x_{\lambda_k},S)\to 0$. Suppose not.
Then there exist $\alpha>0$ and a subsequence $\{x_{\lambda_{k_j}}\}$ such that
$\operatorname{dist}(x_{\lambda_{k_j}},S)\geq \alpha$ for all $j$. Since
$\{x_{\lambda_{k_j}}\}$ is bounded, it has a further convergent subsequence whose
limit belongs to $S$ by the first part of the proof. This contradicts
$\operatorname{dist}(x_{\lambda_{k_j}},S)\geq \alpha$. Hence
$\operatorname{dist}(x_{\lambda_k},S)\to 0$.
The proof is complete. 
\end{proof}
We now combine the one-step recursion from Lemma~\ref{exp bound} with the
Robbins--Siegmund theorem to show that the stochastic iterates track the
regularization path almost surely. As a consequence, their distance to the
solution set of the original SIVI converges to zero almost surely.
\begin{theorem}[Almost sure convergence to the solution set]\label{thm:as_convergence_set}
Let $\{x_k\}$ be the sequence generated by Algorithm~\ref{alg1}. Suppose Assumptions~\ref{assump_error}
and~\ref{assum:problem} hold. Let
$    \lambda_k=(k+1)^{-p},$ $\eta_k=C_1(k+1)^q,$ $\theta_k=C_2(k+1)^{-r},$
where $C_1=(L_1+L_2)^2/\mu$, $C_2=1/(8C_1)$, and
$0<p\leq q\leq r<1$ with $p+r<1$. Let
$N_k=\lceil(k+1)^{1+\delta}\rceil$ for some $\delta>0$. Then
\[
    \|x_k-x_{\lambda_k}\|\to 0
    \qquad \text{almost surely}.
\]
Consequently, $\{x_k\}$ is almost surely bounded and
\[
    \operatorname{dist}(x_k,S)\to 0
    \qquad \text{almost surely}.
\]
In particular, every limit point of $\{x_k\}$ belongs to the solution set $S$ almost surely.
\end{theorem}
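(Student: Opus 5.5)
We apply the Robbins--Siegmund lemma (Lemma~\ref{lem:RS}) to the conditional recursion \eqref{err bound}, but keep the contraction term from the sharper pathwise estimate \eqref{equ for main th0}. Set $v_k\triangleq\|x_k-x_{\lambda_k}\|^2$. Taking conditional expectation in \eqref{equ for main th0} and using Assumption~\ref{assump_error} removes the cross term $-2\theta_k\langle x_k-x_{\lambda_k},\bar w_{k,N_k}\rangle$, because $x_k$ and $x_{\lambda_k}$ are $\mathcal F_k$-measurable. This gives
\[
\mathbb E[v_{k+1}\mid\mathcal F_k]\le v_k-\tfrac14\theta_k\lambda_k\mu\, v_k-\theta_k\big(\tfrac{1}{2\eta_k}-2\theta_k\big)\|\bar z_k-F_{\lambda_k}(x_k)\|^2+A_k ,
\]
where $A_k$ is the deterministic sequence from the proof of Lemma~\ref{bound x_k}. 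That proof already shows $\sum_k A_k<\infty$. We must also check that the residual coefficient is nonnegative. Since $\theta_k\eta_k=C_2C_1(k+1)^{q-r}\le \tfrac18$, we have $2\theta_k\le \tfrac{1}{4\eta_k}<\tfrac{1}{2\eta_k}$, so this coefficient is indeed nonnegative. We then take $\alpha_k=0$, $\beta_k=A_k$ and $u_k=\tfrac14\theta_k\lambda_k\mu v_k$, plus the nonnegative residual term. Lemma~\ref{lem:RS} yields that $v_k\to v$ almost surely and that $\sum_k \theta_k\lambda_k v_k<\infty$ almost surely.

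Next we identify the limit as $v=0$. Since $\theta_k\lambda_k=C_2(k+1)^{-(p+r)}$ and $p+r<1$, we have $\sum_k\theta_k\lambda_k=\infty$. Fix an $\omega$ in the full-measure event where both conclusions of Lemma~\ref{lem:RS} hold, and suppose $v(\omega)>0$. Then $v_k(\omega)\ge v(\omega)/2$ for all large $k$. This forces $\sum_k\theta_k\lambda_kv_k(\omega)=\infty$, a contradiction. Hence $\|x_k-x_{\lambda_k}\|\to0$ almost surely. This step is where the condition $p+r<1$ and the contraction factor $(1-\tfrac14\theta_k\lambda_k\mu)$ are essential. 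The weaker form \eqref{equ for main th} only gives convergence of $v_k$, not convergence to zero, which is why we work from \eqref{equ for main th0}.

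Finally, for the consequences, note that Lemma~\ref{M bound}(i) places every $x_{\lambda_k}$ in the fixed ball $B_\rho(x^*)$. On the almost sure event above, $x_k$ is therefore eventually within distance $1$ of this ball, so $\{x_k\}$ is bounded. The map $\operatorname{dist}(\cdot,S)$ is $1$-Lipschitz, and $S$ is nonempty and closed by Assumption~\ref{assum:problem}(iii) and Lemma~\ref{closedSol}. Hence
\[
\operatorname{dist}(x_k,S)\le\|x_k-x_{\lambda_k}\|+\operatorname{dist}(x_{\lambda_k},S),
\]
and both terms on the right tend to zero, the second by Lemma~\ref{lem:reg_path_limit}. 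Now let $\bar x$ be any limit point of $\{x_k(\omega)\}$ along a subsequence. Continuity of the distance function gives $\operatorname{dist}(\bar x,S)=0$, and closedness of $S$ then gives $\bar x\in S$.

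The main point needing care is the first step: verifying that all terms in $A_k$ are deterministic and summable, and that conditioning legitimately removes the cross term. Both were essentially settled in Lemmas~\ref{exp bound} and~\ref{bound x_k}, so the proof mostly reduces to bookkeeping.
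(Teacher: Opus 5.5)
Your proposal is correct and follows essentially the same route as the paper. Like the paper, you take conditional expectations in \eqref{equ for main th0} to remove the cross term, apply Robbins--Siegmund with the summable deterministic $A_k$, use $\sum_k\theta_k\lambda_k=\infty$ (from $p+r<1$) to force the almost sure limit of $\|x_k-x_{\lambda_k}\|^2$ to zero, and then conclude via the triangle inequality, Lemma~\ref{lem:reg_path_limit}, and closedness of $S$. Your explicit check that $\tfrac{1}{2\eta_k}-2\theta_k\ge 0$ and your pathwise contradiction showing $v(\omega)=0$ just spell out steps the paper leaves implicit.
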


\begin{proof}
Define $V_k:=\|x_k-x_{\lambda_k}\|^2.$
Taking conditional expectation in \eqref{equ for main th0} and  using Assumption \ref{assump_error}, we get
\[
\mathbb{E}[V_{k+1}\mid \mathcal{F}_k]
\leq
\left(1-\frac{\mu}{4}\theta_k\lambda_k\right)V_k + A_k,
\]
where
\[
A_k :=
\frac{M^2|\lambda_k-\lambda_{k+1}|^2}{\lambda_k^2}
\left(3+2\theta_k\eta_k+\frac{2}{\lambda_k\theta_k\mu}\right)
+
\frac{\theta_k\nu^2}{N_k}
\left(10\theta_k+\frac{4}{\lambda_k\mu}\right).
\]
The summability argument in Lemma~\ref{bound x_k} shows that
$\sum_{k=0}^{\infty} A_k < \infty.$
Moreover, $\sum_{k=0}^{\infty} \theta_k\lambda_k
    =
    C_2\sum_{k=0}^{\infty}(k+1)^{-(p+r)}
    =
    \infty,$
because $p+r<1$. By Robbins--Siegmund (Lemma~\ref{lem:RS}), $V_k:=\|x_k-x_{\lambda_k}\|^2$ converges almost surely and $\sum_{k=0}^\infty \theta_k\lambda_k V_k<\infty$ almost surely. Since $\sum_{k=0}^\infty\theta_k\lambda_k=C_2\sum_{k=0}^\infty(k+1)^{-(p+r)}=\infty$ because $p+r<1$, the almost sure limit of $V_k$ must be zero. Hence $\|x_k-x_{\lambda_k}\|\to 0$ almost surely. Since $\{x_{\lambda_k}\}$ is bounded by Lemma~\ref{M bound}, the relation
$\|x_k-x_{\lambda_k}\|\to 0$ implies that $\{x_k\}$ is almost surely bounded.
Furthermore, by Lemma~\ref{lem:reg_path_limit},
$\operatorname{dist}(x_{\lambda_k},S)\to 0$. Therefore,
\[
    \operatorname{dist}(x_k,S)
    \leq
    \|x_k-x_{\lambda_k}\|
    +
    \operatorname{dist}(x_{\lambda_k},S)
    \to 0
    \qquad \text{almost surely}.
\]
Finally, since $S$ is closed (see Lemma \ref{closedSol}), every limit point of $\{x_k\}$ belongs to $S$ almost surely. 
\end{proof}

In the following theorem, we provide the main result of this paper by finding a bound for the 
residual map.
\begin{theorem}\label{main th}
    Let $\{x_k\}_{k\geq 0}$ be a sequence generated by Algorithm \ref{alg1}. Suppose Assumptions \ref{assump_error} and \ref{assum:problem} hold. For the parameter sequences $\lambda_k = (k+1)^{-p}$, $\eta_k = C_1(k+1)^{q}$, and $\theta_k = C_2(k+1)^{-r}$ where $C_1 = \frac{(L_1 + L_2)^2}{\mu}$, $C_2 = \frac{1}{8C_1}$, and $0 < p \leq q \leq r <1$,   $p +r < 1 $ and assuming a batch size $N_k = \lceil(k+1)^{1+\delta}\rceil$ for some $\delta > 0$. Then there exist constants $C,D>0$, independent of $T$, such that
  \begin{align}\label{upper bound for exp}
\nonumber &\min_{0\leq k\leq T-1} \mathbb E[\|H(x_k,\eta_k)\|^2] \\ \nonumber&\leq \frac{1}{T}\Big[(8 p C_1 M)^2 \sum_{k=0}^{T-1} (k+1)^{-2+r+q} \left( \tfrac{16 C_1 (k+1)^{p+r}}{\mu} + \tfrac{(k+1)^{q-r}}{4} + 3 \right) \\ \nonumber&\quad
 + 8C_1\nu^2  \sum_{k=0}^{T-1} {(k+1)^{r+q-1-\delta}} \left( 10 C_2 (k+1)^{-2r} + \tfrac{4 (k+1)^{-r+p}}{\mu} \right) \\
&\quad + \sum_{k=0}^{T-1} 8(k+1)^{-2p} C + 64 C_1^2 D^2 T^{q+r}\Big].
\end{align}
\end{theorem}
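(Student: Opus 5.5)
Starting from the pathwise/conditional recursion \eqref{err bound} of Lemma~\ref{exp bound}, we take total expectation and telescope it. Then we relate the term $\|\bar z_k-F_{\lambda_k}(x_k)\|^2$ to the residual of the unregularized problem, $\|H(x_k,\eta_k)\|^2$.

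\textbf{Step 1: the sum of regularized residuals is $O(1)$-controlled.} With $\theta_k=C_2(k+1)^{-r}$, $\eta_k=C_1(k+1)^q$ and $C_2=1/(8C_1)$ we get $2\theta_k\le \frac{1}{4\eta_k}$. Hence $\theta_k(\frac{1}{2\eta_k}-2\theta_k)\ge \frac{\theta_k}{4\eta_k}=\frac{1}{32C_1^2}(k+1)^{-(q+r)}$. Rearranging \eqref{err bound}, multiplying by $4\eta_k/\theta_k=32C_1^2(k+1)^{q+r}$, taking total expectation and summing over $k=0,\dots,T-1$ gives
\[
\sum_{k=0}^{T-1}\mathbb E\|\bar z_k-F_{\lambda_k}(x_k)\|^2\le \sum_{k=0}^{T-1}\tfrac{4\eta_k}{\theta_k}\big(\mathbb E V_k-\mathbb E V_{k+1}\big)+\sum_{k=0}^{T-1}\tfrac{4\eta_k}{\theta_k}A_k .
\]
The telescoping part has increasing weights, so we use Abel summation, $\sum_k w_k(a_k-a_{k+1})\le w_0a_0+\sum_k (w_{k}-w_{k-1})a_k$ with $a_k\le D^2$ from Lemma~\ref{bound x_k}. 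This bounds it by $w_{T-1}D^2\lesssim C_1^2D^2T^{q+r}$, which is the source of the $64C_1^2D^2T^{q+r}$ term (the constant $64$ absorbs a factor of two introduced in Step 2). For the $A_k$ part we apply Lemma~\ref{MVT} to $|\lambda_k-\lambda_{k+1}|^2\le p^2(k+1)^{-2p-2}$ and $\lambda_k^{-2}=(k+1)^{2p}$. Multiplying by $(k+1)^{q+r}$ then produces exactly the summands $(k+1)^{-2+r+q}(\cdots)$ and $(k+1)^{r+q-1-\delta}(\cdots)$ shown in \eqref{upper bound for exp}, up to the constant bookkeeping $(8pC_1M)^2$ and $8C_1\nu^2$.

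\textbf{Step 2: from the regularized residual to $H$.} The projection is nonexpansive (Lemma~\ref{proj}(i)) and $F_{\lambda_k}=F+\lambda_k\Phi$. Therefore
\[
\|H(x_k,\eta_k)\|\le \|F_{\lambda_k}(x_k)-\bar z_k\|+2\lambda_k\|\Phi(x_k)\|,
\]
which gives $\|H(x_k,\eta_k)\|^2\le 2\|\bar z_k-F_{\lambda_k}(x_k)\|^2+8\lambda_k^2\|\Phi(x_k)\|^2$. We need $\mathbb E\|\Phi(x_k)\|^2\le C$ uniformly. By Lipschitz continuity, $\|\Phi(x_k)\|\le \|\Phi(x^*)\|+L_2(\|x_k-x_{\lambda_k}\|+\|x_{\lambda_k}-x^*\|)$. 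The last two terms are bounded in second moment by Lemma~\ref{bound x_k} ($D^2$) and by Lemma~\ref{M bound}(i) ($\rho$). So some $C$ independent of $T$ exists, and this yields the $\sum 8(k+1)^{-2p}C$ term.

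\textbf{Step 3: conclude.} Bound the minimum by the average, $\min_k\mathbb E\|H\|^2\le \frac1T\sum_k\mathbb E\|H\|^2$, and combine Steps 1 and 2. This gives \eqref{upper bound for exp} after collecting constants. The main obstacle is Step 1's weighted telescoping. Because the multiplier $\eta_k/\theta_k$ grows, the sum does not telescope cleanly, and the uniform expectation bound $D^2$ of Lemma~\ref{bound x_k} is essential to control the Abel-summation increments. Getting the leftover to scale as $T^{q+r}$, rather than something larger, is the delicate point. A secondary nuisance is matching the exact numerical constants, which I would not insist on beyond the stated form.
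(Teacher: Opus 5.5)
Your proposal is correct and follows essentially the same route as the paper's proof in Appendix B. The steps coincide: rearrange the one-step recursion of Lemma~\ref{exp bound} and weight it by a multiple of $\eta_k/\theta_k$; control the growing weights by Abel summation with the uniform bound $D^2$ from Lemma~\ref{bound x_k}; pass to $H(x_k,\eta_k)$ via $\|H(x_k,\eta_k)\|^2\le 2\|\bar z_k-F_{\lambda_k}(x_k)\|^2+8\lambda_k^2\|\Phi(x_k)\|^2$ with a uniform bound $C$ on $\mathbb E\|\Phi(x_k)\|^2$; and bound the minimum by the average. The only differences are cosmetic and give exactly the paper's constants: you start from the conditional-expectation form \eqref{err bound} rather than the pathwise \eqref{equ for main th}, and you replace the paper's weight $2\Gamma_k=\tfrac{2\eta_k}{\theta_k(1-4\theta_k\eta_k)}$ up front by the larger, manifestly increasing weight $4\eta_k/\theta_k=32C_1^2(k+1)^{q+r}$.
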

\begin{proof}
The detailed derivation is provided in Appendix B. There,
we rearrange the one-step estimate \eqref{equ for main th}, use the residual comparison
between $H(x_k,\eta_k)$ and $H_{\lambda_k}(x_k,\eta_k)$, sum over
$k=0,\ldots,T-1$, and apply Assumption \ref{assump_error} and Lemma \ref{bound x_k}. This gives the
cumulative residual bound, and dividing by $T$ yields the stated result.
\end{proof}

\begin{corollary} \label{stochasticCor}
    Under the premises of Theorem \ref{main th}, choosing $p = q = r = 1/4$, the following holds:
 \begin{enumerate}\item[(i)] For any $T \geq 1$: $ \min_{0\leq k\leq T-1} \mathbb{E}\big[\|H(x_k,\eta_k)\|^2\big]
= \mathcal{O}\left(\tfrac{1}{\sqrt{T}}\right).$
\item[(ii)] To compute an $\epsilon$-solution, i.e., $\min_{k} \mathbb E[\|H(x_k, \eta_k)\|^2] \leq \epsilon$, the total number of stochastic oracle calls is $\mathcal{O}(1/\epsilon^{4+2\delta})$.\end{enumerate}
\end{corollary}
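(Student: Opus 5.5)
Substitute $p=q=r=1/4$ into the bound \eqref{upper bound for exp} of Theorem \ref{main th} and check that each of the four groups of terms inside the bracket is $O(\sqrt T)$. After dividing by $T$, this gives part (i). The oracle count then follows by summing the batch sizes over the required number of iterations.

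\textbf{Term-by-term estimates.} With $p=q=r=1/4$ the exponents become $-2+r+q=-3/2$, $p+r=1/2$, and $q-r=0$.
\begin{itemize}
\item The first sum has terms of order $(k+1)^{-3/2}\big(\tfrac{16C_1}{\mu}(k+1)^{1/2}+\tfrac14+3\big)$. The dominant part is $(k+1)^{-1}$, so the sum is $O(\log T)$.
\item In the second sum, $r+q-1-\delta=-1/2-\delta$. Multiplied by $(k+1)^{-1/2}$ (from $-2r$) or by $(k+1)^{0}$ (from $-r+p$), the terms are at most of order $(k+1)^{-1/2-\delta}$. Their sum is $O(T^{1/2-\delta})\le O(\sqrt T)$; if $\delta>1/2$ it is simply bounded by a constant.
\item The third sum is $\sum_{k}8C(k+1)^{-1/2}\le 16C\sqrt T$, using $\sum_{k=1}^T k^{-1/2}\le 2\sqrt T$.
\item The last term is $64C_1^2D^2T^{q+r}=64C_1^2D^2\sqrt T$.
\end{itemize}
Every term is therefore $O(\sqrt T)$, with constants independent of $T$ because $C$, $D$ and $M$ are ($D$ from Lemma \ref{bound x_k}, $M$ from Lemma \ref{M bound}). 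Dividing by $T$ yields $\min_k\mathbb E[\|H(x_k,\eta_k)\|^2]=O(T^{-1/2})$.

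I should confirm that $p=q=r=1/4$ satisfies the hypotheses of Theorem \ref{main th}. It does: $0<p\le q\le r<1$ and $p+r=1/2<1$.

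\textbf{Oracle complexity (part ii).} From (i), $\min_k\mathbb E[\|H\|^2]\le \bar C/\sqrt T$ for some constant $\bar C$, so $T=\lceil \bar C^2/\epsilon^2\rceil=O(\epsilon^{-2})$ iterations suffice. The total number of oracle calls is
\[
\sum_{k=0}^{T-1}N_k\le\sum_{k=0}^{T-1}\big((k+1)^{1+\delta}+1\big)\le T^{2+\delta}+T=O(T^{2+\delta}).
\]
Substituting $T=O(\epsilon^{-2})$ gives $O(\epsilon^{-4-2\delta})$.

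The only delicate point is the uniformity of the constants, in particular that the $O(\log T)$ and $O(T^{1/2-\delta})$ pieces are dominated by $\sqrt T$ for all $T\ge1$ and not just asymptotically. I would handle this by writing explicit bounds such as $\log T\le 2\sqrt T$ and $T^{1/2-\delta}\le\sqrt T$ for $T\ge1$.
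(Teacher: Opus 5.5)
Your proposal is correct and follows essentially the same route as the paper: substitute $p=q=r=1/4$ into the bound of Theorem~\ref{main th}, bound each sum by an integral so that everything inside the bracket is $O(\sqrt{T})$ (the first sum being only $O(\log T)$), divide by $T$, and then sum $N_k=\lceil (k+1)^{1+\delta}\rceil$ over $T=O(\epsilon^{-2})$ iterations to obtain $O(T^{2+\delta})=O(\epsilon^{-4-2\delta})$ oracle calls. One small slip is that at exactly $\delta=1/2$ the second sum is $O(\log T)$, not $O(T^{1/2-\delta})$; the paper sidesteps this by using $\sum_{k}(k+1)^{-1/2-\delta}\le\sum_k(k+1)^{-1/2}\le 2\sqrt{T}-1$, which holds uniformly in $\delta$, and the slip does not affect your conclusion.
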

\begin{proof}

(i) Substituting $p = q = r = 1/4$ into \eqref{upper bound for exp}, we obtain
\begin{align}\label{bound before int}
\nonumber \min_k \mathbb E[\|H(x_k,\eta_k)\|^2] &\leq \frac{1}{T}\Big[(2 C_1 M)^2 \sum_{k=0}^{T-1}  \left( \tfrac{16 C_1 }{\mu (k+1)} + \tfrac{13 (k+1)^{-1.5}}{4} \right) \\&\quad \nonumber + 8C_1\nu^2  \sum_{k=0}^{T-1}  \left( \tfrac{10 C_2 }{(k+1)^{1+\delta}} + \tfrac{4 }{\mu(k+1)^{0.5+\delta}} \right)\\&\quad
  + \sum_{k=0}^{T-1} 8(k+1)^{-0.5} C + 64 C_1^2 D^2 T^{0.5}\Big].
\end{align}
Now bounding each summation using the integral bound $\sum_{k=0}^{T-1} (k+1)^{-m} \leq 1 + \int_{1}^{T} x^{-m} dx $, one can obtain the following
inequalities using simple algebra.
\begin{align*}
&\sum_{k=0}^{T-1} (k+1)^{-1} \leq 1 + \ln T\\
&\sum_{k=0}^{T-1} (k+1)^{-1.5} \leq 1 + \int_{1}^{T} x^{-1.5} dx = 1 + [-2x^{-0.5}]_1^T = 3 - \frac{2}{\sqrt{T}} \leq 3\\
&\sum_{k=0}^{T-1}\tfrac{1}{(k+1)^{1+\delta}} \leq 1+ \sum_{k=1}^{T-1}\tfrac{1}{(k+1)^{1+\delta}} \leq 1+\int_{0}^{T-1} \tfrac{1}{(x+1)^{1+\delta}} \,dx \leq 1+\tfrac{1}{\delta}-\tfrac{1}{T\delta} \leq 1+\tfrac{1}{\delta}\\
&\sum_{k=0}^{T-1} (k+1)^{-(0.5+\delta)} \leq \sum_{k=0}^{T-1} (k+1)^{-0.5} \leq 1 + [2x^{0.5}]_1^T = 2\sqrt{T} - 1\\
\end{align*}
Next, using the above inequalities in \eqref{bound before int} yields 

\begin{align*}
\nonumber \min_k \mathbb E[\|H(x_k,\eta_k)\|^2] &\leq \frac{1}{T}\Big[(2 C_1 M)^2   \left( \tfrac{16 C_1 (1+\ln T)}{\mu} + \tfrac{39}{4} \right) \\&\quad+ 8C_1\nu^2  \left( {10 C_2 (1+\tfrac{1}{\delta})}+ \tfrac{4 }{\mu}(2\sqrt{T} - 1) \right) \\&\quad
  +  8C(2\sqrt{T}-1)  + 64 C_1^2 D^2 T^{0.5}\Big].
\end{align*}
Rearranging terms and discarding the negative components, we get:
\begin{align*}
\nonumber \min_k \mathbb E[\|H(x_k,\eta_k)\|^2] &\leq \tfrac{(8 M)^2 C_1^3 }{\mu}   \left( \tfrac{\ln T}{ T}  \right) +   \tfrac{64 C_1\nu^2   }{\mu \sqrt{T}}
  +  \tfrac{16C}{\sqrt{T}}  + \tfrac{64 C_1^2 D^2 }{\sqrt{T}}+\tfrac{\Delta}{T} = \mathcal{O}(\tfrac{1}{\sqrt{T}}).
\end{align*}
where $\Delta = (2 C_1 M)^2(\tfrac{ 16 C_1 }{\mu}+ \tfrac{39 }{4})+ 10\nu^2  {  (1+\tfrac{1}{\delta})}$ for some $\delta > 0$. \\

(ii) Since at each iteration $k$ of the method, $N_k = \lceil(k+1)^{1+\delta}\rceil$ samples are required, then we have $$   \sum_{k=0}^{T-1}\lceil(k+1)^{1+\delta}\rceil
   \le T+\sum_{k=0}^{T-1}(k+1)^{1+\delta}
   =O(T^{2+\delta}) = \mathcal{O}\left(\frac{1}{\epsilon^{4+2\delta}}\right).$$
\end{proof}

The preceding corollary establishes a rate for the residual $H(x_k,\eta_k)$ evaluated with the iteration-dependent parameter $\eta_k$. Although $H(x,\eta)=0$ characterizes a solution for every $\eta>0$, the residual mapping changes with $k$, and its magnitude is affected by the increasing sequence $\{\eta_k\}$. For a consistent optimality measure that can be compared across iterations, it is therefore useful to show that the same convergence rate also holds for a fixed parameter $\bar\eta>0$. The following corollary establishes this result.

\begin{corollary}
    For the iterates generated under the conditions of Corollary \ref{stochasticCor}, for any fixed $\bar\eta\in(0,C_1]$,
    $$\min_{0\leq k\leq T-1}\mathbb E[\|H(x_k,\bar\eta)\|^2]=\mathcal O(1/\sqrt T).$$
\end{corollary}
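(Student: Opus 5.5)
The plan is to reduce the claim to Corollary \ref{stochasticCor}(i) by a pointwise comparison of residuals. I aim to show that, for every fixed $x\in\mathbb R^n$, the map $\eta\mapsto\|H(x,\eta)\|$ is nondecreasing on $(0,\infty)$. Since $\eta_k=C_1(k+1)^{1/4}\ge C_1\ge\bar\eta$ for all $k\ge 0$, this gives $\|H(x_k,\bar\eta)\|\le\|H(x_k,\eta_k)\|$ pathwise. Squaring, taking expectations and minimizing over $0\le k\le T-1$ then yields
\[
\min_{0\le k\le T-1}\mathbb E[\|H(x_k,\bar\eta)\|^2]\le \min_{0\le k\le T-1}\mathbb E[\|H(x_k,\eta_k)\|^2]=\mathcal O(1/\sqrt T).
\]
The restriction $\bar\eta\in(0,C_1]$ is used exactly here, to guarantee $\bar\eta\le\eta_0$.

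For the monotonicity step, fix $x$ and set $u=F(x)$, $d=x$, so that $H(x,t)=u-p(t)$ with $p(t):=\mathbf P_X[u-td]$. Take $0<t_1<t_2$ and write $a=p(t_1)$, $b=p(t_2)$. I would apply Lemma \ref{proj}(ii) twice: once with the point $b$ for the projection defining $a$, and once with the point $a$ for the projection defining $b$. This gives $\langle u-t_1d-a,\,b-a\rangle\le 0$ and $\langle u-t_2d-b,\,a-b\rangle\le0$. The goal is to combine these two inequalities, with suitable weights (for instance $t_2$ and $t_1$, as in the Gafni--Bertsekas/Calamai--Moré argument), into
\[
\|u-a\|^2\le\langle u-a,\,u-b\rangle .
\]
By Cauchy--Schwarz this gives $\|u-a\|\le\|u-b\|$, i.e.\ $\|H(x,t_1)\|\le\|H(x,t_2)\|$. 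The classical lemma of this type, that $t\mapsto\|y-\mathbf P_X[y-td]\|$ is nondecreasing, is usually stated for $y\in X$, so I would check carefully whether the argument survives when $u=F(x)$ lies outside $X$.

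The main obstacle is this last point. The iterates satisfy $F(x_k)\in X$ only in the limit, so I cannot assume $u\in X$. If the direct two-inequality argument fails for $u\notin X$, my first fallback is to decompose $u=\mathbf P_X[u]+(u-\mathbf P_X[u])$. I would then apply the classical monotonicity at the feasible point $\mathbf P_X[u]$ and control the discrepancy by nonexpansiveness (Lemma \ref{proj}(i)), which gives an extra error term bounded by a constant multiple of $\|u-\mathbf P_X[u]\|=\operatorname{dist}(F(x),X)$.

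The remaining task would be to bound $\operatorname{dist}(F(x_k),X)$ by $\|H(x_k,\eta_k)\|$. This follows because $\mathbf P_X[F(x_k)-\eta_kx_k]\in X$, so $\operatorname{dist}(F(x_k),X)\le\|H(x_k,\eta_k)\|$. Consequently $\|H(x_k,\bar\eta)\|\le c\,\|H(x_k,\eta_k)\|$ for an absolute constant $c$, and the $\mathcal O(1/\sqrt T)$ rate transfers from Corollary \ref{stochasticCor}(i) with the constant multiplied by $c^2$.
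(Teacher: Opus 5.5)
Your proposal is correct and follows the paper's route: the paper also shows that $\eta\mapsto\|H(x,\eta)\|$ is nondecreasing for every fixed $x$, using the two projection inequalities at $p_1=\mathbf{P}_X(F(x)-\eta_1x)$ and $p_2=\mathbf{P}_X(F(x)-\eta_2x)$, and then uses $\eta_k\ge C_1\ge\bar\eta$ to transfer the rate from Corollary~\ref{stochasticCor}. Your concern about $F(x)\notin X$ is unfounded, so the fallback is unnecessary (though also valid): weighting the first inequality by $t_2$ and the second by $t_1$ gives $(t_2-t_1)\langle u-a,b-a\rangle\le -t_1\|b-a\|^2\le 0$, which is exactly your target $\|u-a\|^2\le\langle u-a,u-b\rangle$, and the argument never uses $u\in X$ (the paper instead sums with equal weights to get $\langle x,p_2-p_1\rangle\le 0$ and substitutes this back into the first inequality, which is equivalent).
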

\begin{proof}
Let $0<\eta_1\leq \eta_2$, and define
$p_i:=P_X\bigl(F(x)-\eta_i x\bigr),$  for $i=1,2$. By the projection optimality condition,
\begin{align*}
\left\langle F(x)-\eta_1x-p_1,\;p_2-p_1\right\rangle \leq 0, \quad
\left\langle F(x)-\eta_2x-p_2,\;p_1-p_2\right\rangle \leq 0.
\end{align*}
Summing these two inequalities, we obtain
$\|p_2-p_1\|^2
\leq
-(\eta_2-\eta_1)\left\langle x,\;p_2-p_1\right\rangle.$
Hence,
$\left\langle x,\;p_2-p_1\right\rangle \leq 0.$
Moreover, from the first projection inequality,
$\left\langle F(x)-p_1,\;p_2-p_1\right\rangle
\leq
\eta_1\left\langle x,\;p_2-p_1\right\rangle 
\leq 0.$
Therefore,
\begin{align*}
\|H(x,\eta_2)\|^2
&=
\|F(x)-p_2\|^2 \\
&=
\|F(x)-p_1-(p_2-p_1)\|^2 \\
&=
\|F(x)-p_1\|^2
-2\left\langle F(x)-p_1,\;p_2-p_1\right\rangle
+\|p_2-p_1\|^2 \\
&\geq
\|F(x)-p_1\|^2 
=
\|H(x,\eta_1)\|^2.
\end{align*}
Thus, $\|H(x,\eta)\|$ is nondecreasing in $\eta$.
Consequently, since $\eta_k=C_1(k+1)^{1/4}\ge C_1\ge \bar\eta$, we obtain $\|H(x_k,\bar\eta)\|\le \|H(x_k,\eta_k)\|$.
Hence,
\begin{align*}
\min_{0\leq k\leq T-1}
\mathbb{E}\!\left[\|H(x_k,\bar{\eta})\|^2\right]
&\leq
\min_{0\leq k\leq T-1}
\mathbb{E}\!\left[\|H(x_k,\eta_k)\|^2\right] 
=
\mathcal{O}\!\left(1/\sqrt T\right).
\end{align*}
\end{proof}
\subsection{Deterministic IVI} 
We now consider a deterministic IVI problem. Let $X \subseteq \mathbb{R}^n$ be a nonempty closed convex set, and $F:\mathbb{R}^n \to \mathbb{R}^n$ be a continuous nonlinear map. 
We seek an $x^* \in \mathbb{R}^n$ such that $F(x^*) \in X$ and
\begin{align}\label{determIVI}\tag{IVI}
    \langle y - F(x^*),x^* \rangle \geq 0, \quad \forall y \in X
\end{align}

\begin{algorithm}[H]\normalsize
\caption{Regularized Inverse Projected Gradient (R-IPG) method}
\label{alg2}
\begin{algorithmic}
\STATE {\bf Input}: $x_0 \in \mathbb R^n$, $\{\eta_{k}, \theta_k, \lambda_k\}_{k\geq 0} \subseteq \mathbb{R}_+$;
\FOR{$k=0, \dots, T-1$}

\STATE  $z_k = \mathbf{P}_{X}\left[ F(x_k)  + \lambda_k \Phi(x_k) - \eta_{k} x_k\right]$
\STATE  $x_{k+1} = x_k - \theta_k \left( F(x_k)  + \lambda_k \Phi(x_k) - z_k \right)$
\ENDFOR
\end{algorithmic}
\end{algorithm}

As mentioned earlier, monotone deterministic IVIs have been studied in \cite{anh2024regularized,luo2014tikhonov}, where convergence of regularized methods was established under mild monotonicity assumptions. However, to the best of our knowledge, no explicit convergence-rate result has been obtained for the monotone deterministic setting. In this section, we adopt a regularization strategy similar to the one used for the stochastic problem \eqref{sIvI} by introducing a regularizer \(\Phi:\mathbb{R}^n\to\mathbb{R}^n\) that is \(\mu\)-strongly monotone and Lipschitz continuous. This approach enables us to establish an explicit convergence rate for the deterministic inverse variational inequality problem.
We define for $\lambda > 0$ the regularized operator
\[F_\lambda(x) \triangleq F(x) + \lambda \Phi(x)\]
Given the deterministic nature of the problem, we modify the (RVC-IPG) algorithm by removing the increasing batch-size scheme, yielding a Regularized Inverse Projected Gradient method (R-IPG) which is outlined in Algorithm \ref{alg2}.
The new algorithm is recovered from Algorithm \ref{alg1} by setting the batch size $N_k = 1$ and replacing the sampling with a direct call to $F$.

To measure how far the iterates are from the optimal solution, we examine the degree to which the optimality condition \eqref{opt-cond} is violated using the same residual map $H(x,\eta)$ defined in Section \ref{proposed}.
In the following theorem, we demonstrate the iteration and oracle complexity for the deterministic case. The proof follows the same argument as Theorem~\ref{main th}, with exact evaluations of the mean operator $F$. Equivalently, the stochastic error satisfies $\bar w_{k,N_k}=0$, and all batch-variance terms vanish.
\begin{theorem}[Deterministic IVI]\label{thm:deterministic} Suppose Assumption~\ref{assum:problem} holds, and let the parameter sequences be chosen as in Theorem~\ref{main th} for the deterministic setting. Then the following statements hold:
\begin{enumerate}\item[(i)] For any $T \geq 1$: $\min_{k } \|H(x_k, \eta_k)\|^2 = \mathcal{O}\left(1/{\sqrt{T}}\right)$
\item[(ii)] To compute an $\epsilon$-solution, i.e., $\min_{k} \|H(x_k, \eta_k)\|^2 \leq \epsilon$, the total number of operator calls is $\mathcal{O}(1/\epsilon^2)$.\end{enumerate}\end{theorem}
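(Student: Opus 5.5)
The plan is to specialize the stochastic analysis to $\bar w_{k,N_k}\equiv 0$. Algorithm~\ref{alg2} is Algorithm~\ref{alg1} with the sample average replaced by $F(x_k)$, so the pathwise estimate \eqref{equ for main th} of Lemma~\ref{exp bound} holds verbatim with all $\bar w$ terms deleted. Its derivation uses only Lemma~\ref{bound of inner pro}, nonexpansiveness of the projection, Young's inequality and Lemma~\ref{M bound}, none of which involve randomness. This gives
\begin{align*}
\|x_{k+1}-x_{\lambda_{k+1}}\|^2 &\le \|x_k-x_{\lambda_k}\|^2-\theta_k\big(\tfrac{1}{2\eta_k}-2\theta_k\big)\|\bar z_k-F_{\lambda_k}(x_k)\|^2\\
&\quad+\tfrac{M^2|\lambda_k-\lambda_{k+1}|^2}{\lambda_k^2}\big(3+2\theta_k\eta_k+\tfrac{2}{\lambda_k\theta_k\mu}\big),
\end{align*}
where now $\bar z_k=z_k$. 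With $C_2=1/(8C_1)$ and $q\le r$ we have $\theta_k\le \tfrac{1}{8\eta_k}$, so $\tfrac{1}{2\eta_k}-2\theta_k\ge \tfrac{1}{4\eta_k}$. The coefficient of the residual is therefore at least $\theta_k/(4\eta_k)=\tfrac{1}{32C_1^2}(k+1)^{-(q+r)}$.

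Next, I will bound $\|x_k-x_{\lambda_k}\|\le D$ deterministically, exactly as in Lemma~\ref{bound x_k}. The recursion gives $\|x_k-x_{\lambda_k}\|^2\le \|x_0-x_{\lambda_0}\|^2+\sum_j A_j$, where now $A_j$ contains only term (a). Term (a) is summable because $p+r<1$ and $q\le r$, via Lemma~\ref{MVT} with $h(t)=(t+1)^{-p}$. Summing the displayed inequality over $k=0,\dots,T-1$, telescoping, and multiplying by $32C_1^2(T)^{q+r}$ yields
\[
\sum_{k=0}^{T-1}\|z_k-F_{\lambda_k}(x_k)\|^2\le 32C_1^2T^{q+r}\Big(D^2+\sum_k A_k\Big).
\]
Here I use that $(k+1)^{q+r}\le T^{q+r}$. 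This is the deterministic counterpart of the argument in Appendix~B.

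The step needing care is the comparison between $H(x_k,\eta_k)=F(x_k)-\mathbf P_X(F(x_k)-\eta_kx_k)$ and the regularized residual $F_{\lambda_k}(x_k)-z_k$. By nonexpansiveness (Lemma~\ref{proj}(i)),
\[
\|H(x_k,\eta_k)\|\le \|F_{\lambda_k}(x_k)-z_k\|+2\lambda_k\|\Phi(x_k)\|.
\]
Moreover, $\|\Phi(x_k)\|\le \|\Phi(x^*)\|+L_2(\|x_k-x_{\lambda_k}\|+\|x_{\lambda_k}-x^*\|)\le \|\Phi(x^*)\|+L_2(D+\rho)$, with $\rho$ as in Lemma~\ref{M bound}(i). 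Squaring gives
\[
\|H(x_k,\eta_k)\|^2\le 2\|F_{\lambda_k}(x_k)-z_k\|^2+8C(k+1)^{-2p}
\]
for a constant $C$ independent of $T$. Averaging over $k$ then gives
\[
\min_k\|H(x_k,\eta_k)\|^2\le \tfrac1T\Big[O(T^{q+r})+O\big(\textstyle\sum_k (k+1)^{-2p}\big)\Big].
\]

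For part (i) I take $p=q=r=1/4$, as in Corollary~\ref{stochasticCor}. This gives $T^{-1}\big(O(T^{1/2})+O(T^{1/2})\big)=O(T^{-1/2})$; the integral bounds are identical to those in that corollary, with the $\nu^2$ terms absent. For part (ii), $\epsilon$-accuracy requires $T=O(\epsilon^{-2})$ iterations. Each iteration makes exactly one evaluation of $F$, plus one of $\Phi$ and one projection, so the total number of operator calls is $O(\epsilon^{-2})$.
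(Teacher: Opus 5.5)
Your proposal is correct and follows the paper's argument. You specialize \eqref{equ for main th} to $\bar w_{k,N_k}=0$, bound $\|x_k-x_{\lambda_k}\|$ deterministically as in Lemma~\ref{bound x_k}, compare residuals via $\|H(x_k,\eta_k)\|^2\le 2\|F_{\lambda_k}(x_k)-z_k\|^2+8\lambda_k^2\|\Phi(x_k)\|^2$, and set $p=q=r=1/4$. The only deviation is minor and still gives the same $O(T^{q+r})$ cumulative bound: you lower-bound the residual coefficient uniformly by $(32C_1^2)^{-1}T^{-(q+r)}$ and telescope directly, rather than using the paper's $\Gamma_k$-weighted Abel summation; you also justify the bound on $\|\Phi(x_k)\|$ more explicitly than the paper does.
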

\begin{proof} The result follows from the proof of Theorem~\ref{main th} by considering exact evaluations of the mean operator $F$. Equivalently, we set $\bar w_{k,N_k}=0$ for all $k$, so all stochastic-error and conditional-variance terms vanish.
        Thus we restate \eqref{equ for main th} from Lemma \ref{exp bound} as 
  \begin{align*}\label{eq:det_recursion}
    \|x_{k+1} - x_{\lambda_{k+1}}\|^2
    \leq &
    \|x_k - x_{\lambda_k}\|^2
    - \theta_k \left(\tfrac{1}{2\eta_k} - 2\theta_k\right)
      \|\bar z_k - F_{\lambda_k}(x_k)\|^2\\
    &+ \frac{M^2|\lambda_k - \lambda_{k+1}|^2}{\lambda_k^2}
      \left(3 + 2\theta_k\eta_k + \frac{2}{\lambda_k\theta_k\mu}\right),
  \end{align*}
  where $\bar z_k = \mathbf{P}_X[F_{\lambda_k}(x_k) - \eta_k x_k]$. 
  The same boundedness argument as in Lemma \ref{bound x_k} applies after removing the stochastic-error terms.
  In the same manner as Theorem \ref{main th} we obtain the following bound
    \begin{align*}
        &\sum_{k = 0}^{T-1} \|H(x_k,\eta_k)\|^2\\ &\leq      4\Gamma_{T-1}D^2
        \\ &+ (8pC_1 M)^2 \sum_{k=0}^{T-1}
            (k+1)^{-2+r+q}
            \left(\frac{16C_1(k+1)^{p+r}}{\mu}
                + \frac{(k+1)^{q-r}}{4} + 3\right) \\
        &+ \sum_{k=0}^{T-1} 8(k+1)^{-2p}C
    \end{align*}
    where $C >0$ and bounds $\sup_k \| \Phi(x_k) \|^2$ and $\Gamma_{T-1} = \mathcal{O}(T^{q+r})$.
    Choosing $p = q = r = \tfrac{1}{4}$ and applying the same integral bounds as in Corollary \ref{stochasticCor} gives
    \begin{align}
        \min_{0\leq k\leq T-1}\|H(x_k,\eta_k)\|^2 \leq \frac{1}{T} \left[ C_1^3\,M^2\,\frac{\ln T}{\mu}  + \sqrt{T}\right] = \mathcal{O}(\frac{1}{\sqrt{T}})
    \end{align} 
Moreover, since the deterministic method uses one exact evaluation of $F$ per iteration, an $\epsilon$-solution requires $O(\epsilon^{-2})$ operator evaluations. 
  
\end{proof}

\section{Numerical Experiments}\label{Numeric}
In this section, to illustrate the effectiveness of the proposed method, we solve a network equilibrium problem with asymmetric interactions and investigate the convergence behavior of the proposed method under monotone mappings. All experiments are performed in \zi{Python 3.14.2} on a machine running 64-bit Windows 11 with Intel i5-1135G7 @2.40GHz and 8GB RAM.

{\bf Network Equilibrium with Asymmetric Interactions.}
In this example, we evaluate the proposed RVC-IPG method on a synthetic network equilibrium problem with asymmetric interactions. The purpose of this experiment is to examine the performance of the proposed method in a setting that is monotone and Lipschitz continuous but not co-coercive. Such a setting is not covered by existing projection-based IVI methods that rely on co-coercivity.
Let $n=100$, and define the mean operator $F:\mathbb{R}^n\to\mathbb{R}^n$ by $F(x)=(M+S)x\zi{+b}$, where $M\in\mathbb{R}^{n\times n}$ is a diagonal positive semidefinite matrix, $S\in\mathbb{R}^{n\times n}$ is skew-symmetric, \zi{and $b\in\mathbb{R}^n$ is a
fixed offset vector}. \zi{The variable $x$ represents the deviation of the network state or control vector from a prescribed nominal operating point.} Specifically, we set $M=\operatorname{diag}(0,0,0.5,\ldots,0.5)$ and construct $S$ as a block-diagonal matrix with $2\times 2$ skew-symmetric blocks,
\begin{align*}
S=\operatorname{blkdiag}\left(
\begin{bmatrix}
0 & -\rho_1\\
\rho_1 & 0
\end{bmatrix},
\ldots,
\begin{bmatrix}
0 & -\rho_{n/2}\\
\rho_{n/2} & 0
\end{bmatrix}
\right),
\end{align*}
where each $\rho_i$ is sampled independently from the uniform distribution on $\zi{[1,2]}$. The matrix $M$ models direct monotone effects, while $S$ models asymmetric interactions among network components. \zi{The offset $b$ is generated as
$b=-(M+S)\bar x$ with  $\bar{x}=\tfrac14\mathbf{1}$, } and the feasible set is chosen as $X=[0,2]^n$.
The stochastic oracle is given by $G(x,\xi)=F(x)+\xi$, where $\xi\sim\mathcal{N}(0,\sigma^2 I_n)$. Hence, $\mathbb{E}[G(x,\xi)]=F(x)$. For the additive Gaussian oracle, the batch average is sampled equivalently as $F(x_k)+\bar\xi_k$, where $\bar\xi_k\sim\mathcal N(0,\sigma^2N_k^{-1}I_n)$. We consider $\sigma\in \zi{\{ 0.5, 5, 50\}}$. The regularization operator is chosen as $\Phi(x)=x$, which is $1$-strongly monotone and $1$-Lipschitz continuous.

We next verify that this example satisfies the assumptions of our analysis while falling outside the co-coercive setting. For any $x,y\in\mathbb{R}^n$, let $z=x-y$. Since $M\succeq 0$ and $S^\top=-S$, we have $\langle F(x)-F(y),x-y\rangle=\langle (M+S)z,z\rangle=\langle Mz,z\rangle+\langle Sz,z\rangle=\langle Mz,z\rangle\geq 0$. Thus, $F$ is monotone. Moreover, since $F$ is linear, it is Lipschitz continuous with constant $L_1=\|M+S\|_2$. However, $F$ is not co-coercive. Indeed, because the first two diagonal entries of $M$ are zero, the first $2\times 2$ block contains a purely skew-symmetric component. Taking any nonzero vector $z$ supported only on the first two coordinates gives $\langle (M+S)z,z\rangle=0$, while $\|(M+S)z\|^2>0$. Therefore, there exists no constant $\beta>0$ such that $\langle F(x)-F(y),x-y\rangle\geq \beta\|F(x)-F(y)\|^2$ for all $x,y\in\mathbb{R}^n$.

The SIVI problem is to find $x^*\in\mathbb{R}^n$ such that $F(x^*)\in X$ and $\langle y-F(x^*),x^*\rangle\geq 0$ for all $y\in X$. 
We implement Algorithm~\ref{alg1} with the parameter choices $\lambda_k=(k+1)^{-1/4}$, $\eta_k=C_1(k+1)^{1/4}$, and $\theta_k=C_2(k+1)^{-1/4}$, where $C_1=(L_1+L_2)^2/\mu$, $C_2=1/(8C_1)$, and $L_2=\mu=1$. We compare the theoretically motivated increasing batch sizes $N_k=\lceil (k+1)^{1.1}\rceil$ and $N_k=\lceil (k+1)^{2.1}\rceil$ with the constant-batch baseline $N_k=100$. The algorithm is run for \zi{$10^5$} iterations. To estimate the expected residual map, we average over $50$ independent simulation runs initialized from the same randomly generated point $x_0\in X$. The confidence intervals are computed using the Student $t$-distribution. 
\begin{figure}[htb]
    \centering\vspace{-0.5cm}
\subfloat[Effect of $\sigma$ with fixed $N_k=\lceil (k+1)^{1.1}\rceil$]{\includegraphics[scale=0.24]{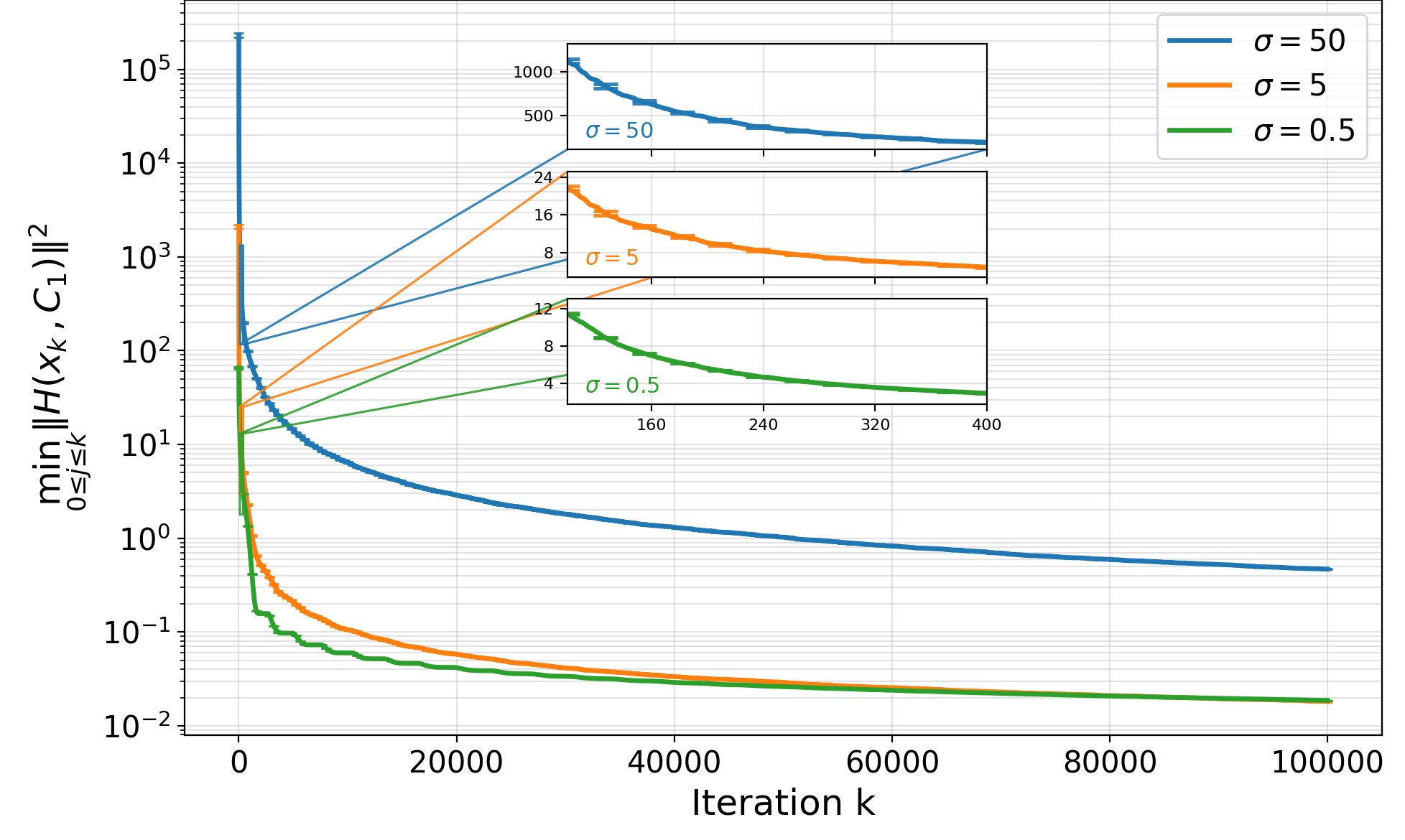}}
\subfloat[Effect of $N_k$ with fixed $\sigma=50$]{\includegraphics[scale=0.24]{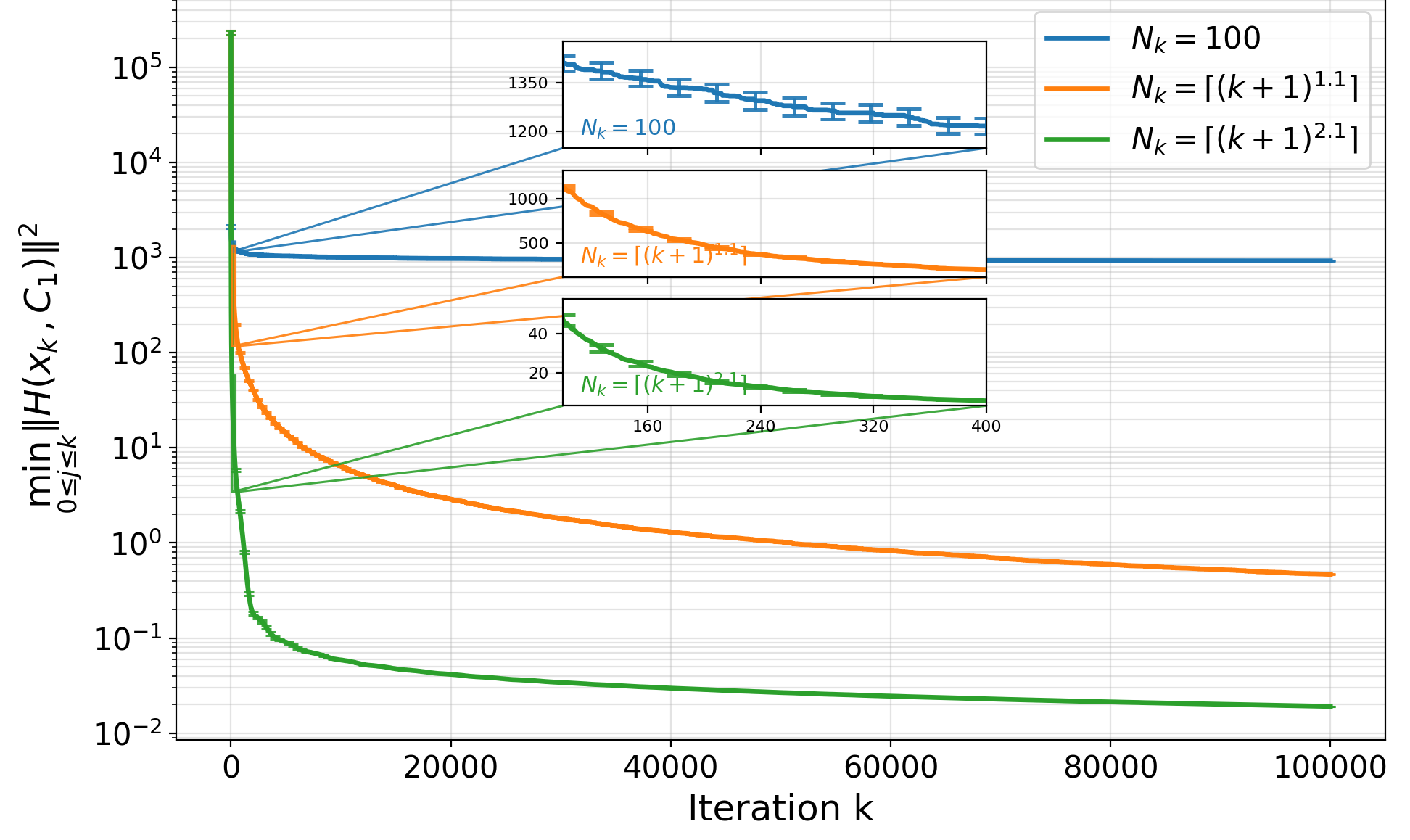}}
     \caption{\zi{Pointwise residual $\mathbb{E}[\|H(x_k,\bar\eta)\|^2]$ with
  $\bar\eta=C_1$ }. The error bars represent the $95\%$ confidence intervals over $50$ independent runs.}
    \label{fig:MS_pointwise_gap}
\end{figure}

\begin{figure}[htb]
    \centering\vspace{-0.5cm}
\subfloat[Effect of $\sigma$ with fixed $N_k=\lceil (k+1)^{1.1}\rceil$]{\includegraphics[scale=0.24]{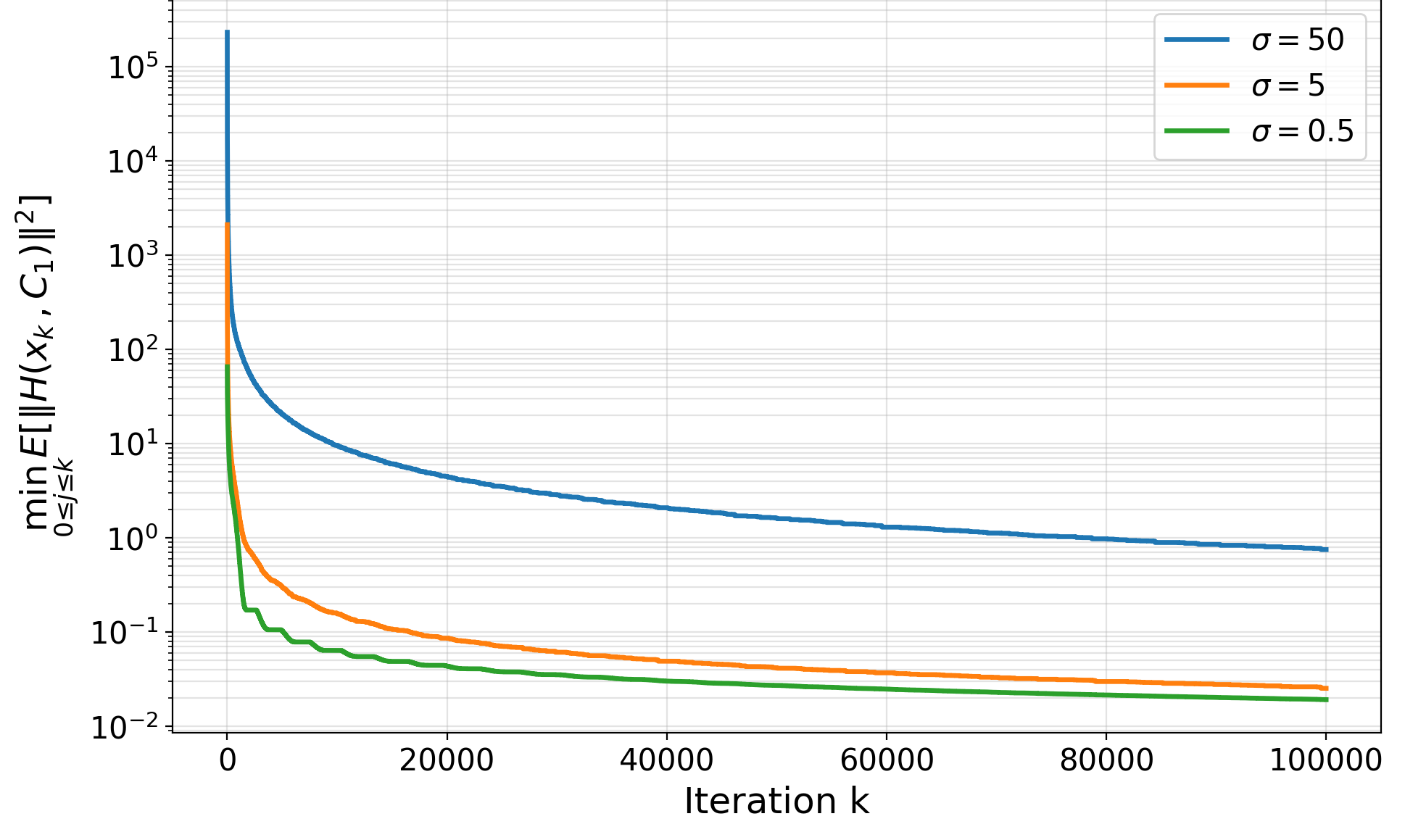}}
\subfloat[Effect of $N_k$ with fixed $\sigma=50$]{\includegraphics[scale=0.24]{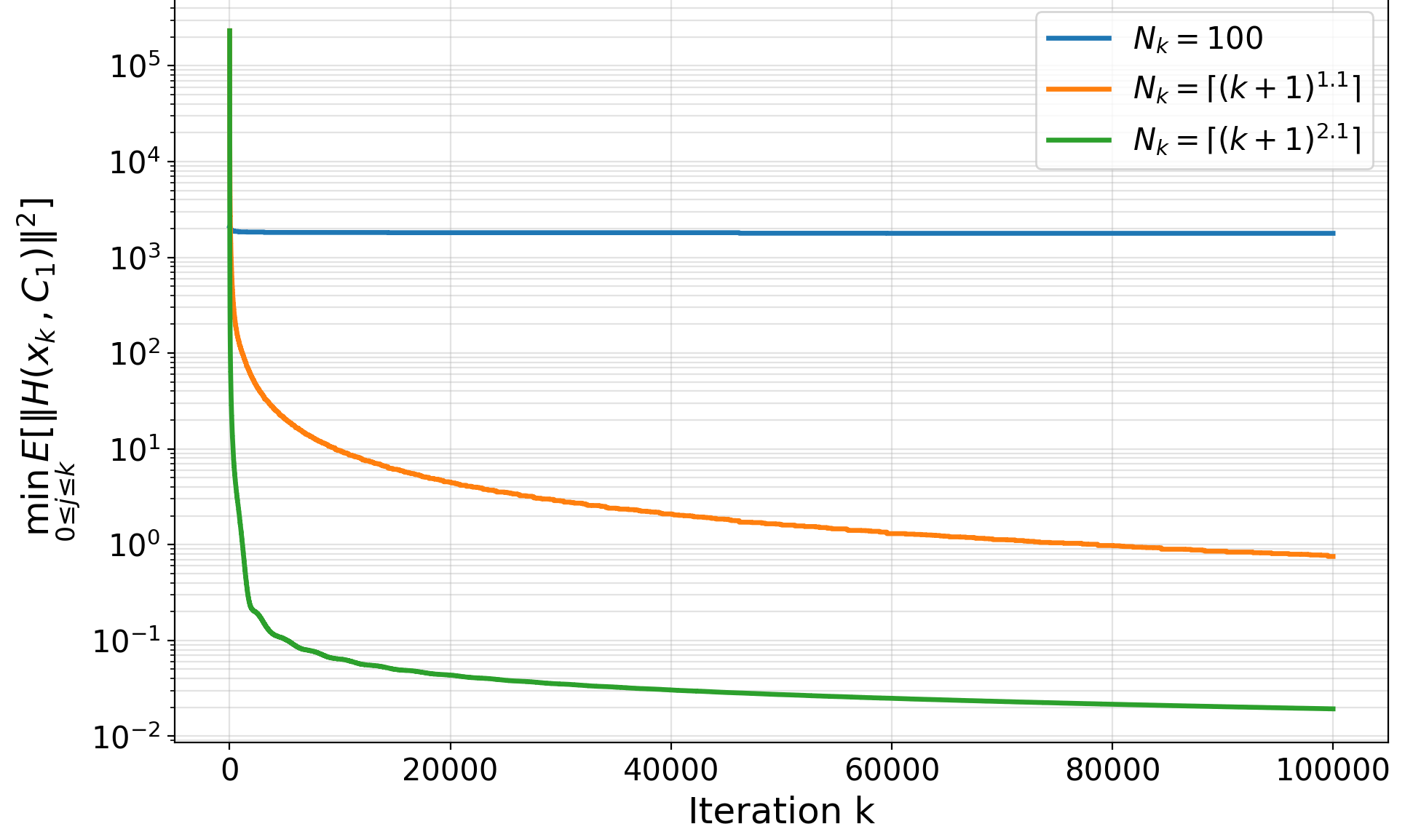}}
      \caption{Best residual up to iteration $k$, given by the sample average of
$\min_{0\le j\le k}\|H(x_j,\bar\eta)\|^2$ with $\bar\eta=C_1$.}
    \label{fig:MS_best_gap}
\end{figure}

\begin{figure}[htb]
    \centering\vspace{-0.5cm}
     \subfloat[Effect of $\sigma$ with fixed $N_k=\lceil (k+1)^{1.1}\rceil$]{\includegraphics[scale=0.24]{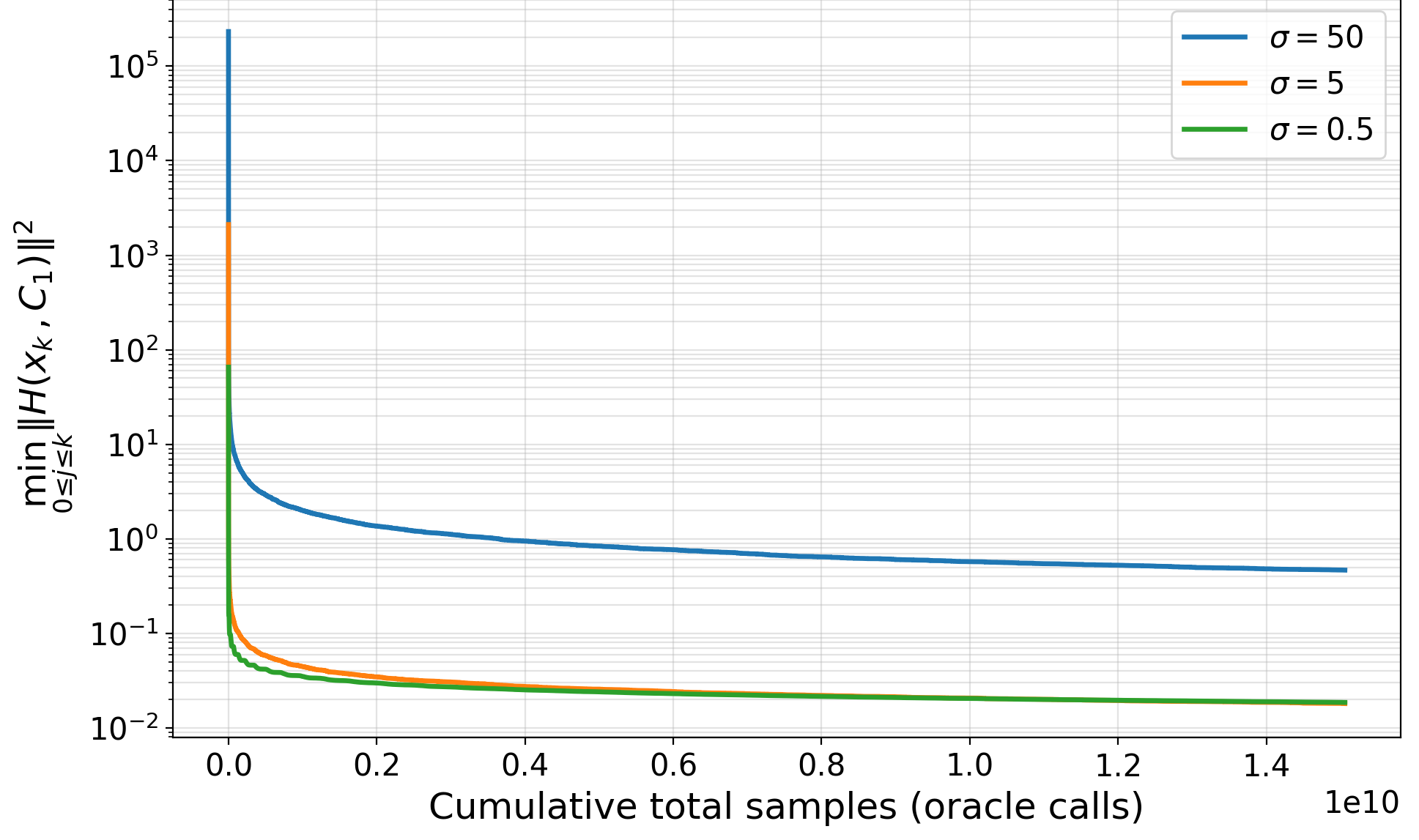}}
    \subfloat[Effect of $N_k$ with fixed $\sigma=50$]{\includegraphics[scale=0.24]{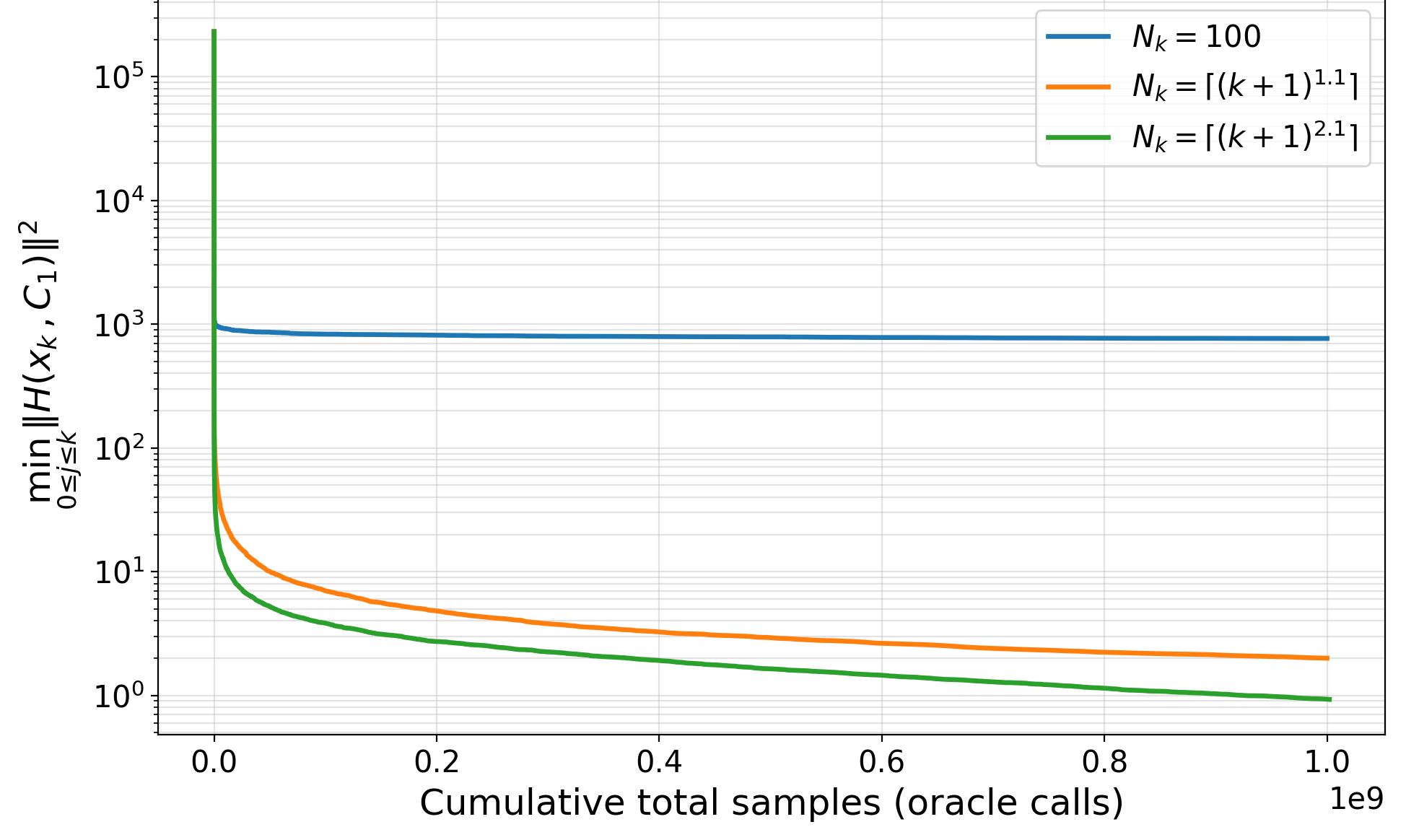}}
        \caption{Best residual \zi{$\min_{0\leq j\leq k}\|H(x_j,\bar \eta)\|^2$ with $\bar\eta=C_1$} versus the cumulative number of stochastic oracle calls.}
    \label{fig:MS_oracle_gap}
\end{figure}
Figure~\ref{fig:MS_pointwise_gap} reports the pointwise Monte Carlo estimate of \zi{$\mathbb{E}[\|H(x_k,\bar \eta)\|^2]$}. This plot illustrates the raw behavior of the residual map along the generated iterates and shows how the residual responds to different noise levels and batch-size schedules, together with $95\%$ confidence intervals. Since the theoretical result is stated for the best residual up to iteration $k$, Figure~\ref{fig:MS_best_gap} reports the sample average of \zi{$\min_{0\leq j\leq k}\|H(x_j,\bar \eta)\|^2$} over the simulation runs. This quantity is the empirical counterpart of the residual controlled by our convergence-rate result. Figure~\ref{fig:MS_oracle_gap} plots the same best residual against the cumulative number of stochastic oracle calls, highlighting the tradeoff between residual reduction and sampling effort under different batch-size choices. The constant-batch baseline illustrates that controlling the sampling error is essential in the stochastic setting, while the increasing batch-size schemes continue to reduce the best residual as the oracle budget grows.
Together, these plots illustrate the behavior predicted by the theory on monotone instances that are not covered by co-coercive SIVI methods. 

   


\section{Conclusions}
We studied stochastic inverse variational inequalities under monotonicity and Lipschitz continuity, without requiring co-coercivity or strong monotonicity of the mean operator. We proposed a regularized variance-controlled inverse projected-gradient method that combines Tikhonov regularization with an increasing batch-size sampling scheme. The proposed method was shown to generate almost surely bounded iterates whose distance to the SIVI solution set converges to zero almost surely. We also established an explicit $\mathcal O(T^{-1/2})$ nonasymptotic rate for the expected squared residual, together with the corresponding iteration and stochastic oracle complexity guarantees. A deterministic counterpart was analyzed and shown to achieve the same iteration complexity using exact operator evaluations. Numerical experiments on monotone network equilibrium problems that are not co-coercive illustrated the practical behavior of the method. Future work includes incorporating variance-reduction techniques that may improve stochastic oracle complexity, extending the framework to distributed SIVIs, and deriving sharper rates under additional structural assumptions.

\appendix\normalsize  
\section*{Appendix A: Proof of Lemma \ref{exp bound}}\label{app:proof-lem}
Using the update rule of $x_{k+1}$ and recalling the definition of $\bar w_{k,N_k}$ in \eqref{def:w} we obtain
\begin{align*}
    \| x_{k+1} - x_{\lambda_{k+1}}\|^2 &=   \| x_{k}-\theta_k\left(\ F_{\lambda_k}(x_k)+\bar w_{k,N_k} -z_k\right) - x_{\lambda_{k+1}}\|^2\\
    &= \| x_{k} - x_{\lambda_{k}}\|^2 + \|  x_{\lambda_{k}} - x_{\lambda_{k+1}}\|^2\\ &\quad+ \theta_k^2 \|z_k - (F_{\lambda_k}(x_k)+ \bar w_{k,N_k}) \|^2+2 \langle x_k -x_{\lambda_{k}}, x_{\lambda_{k}}-x_{\lambda_{k+1}}\rangle  \\&\quad + 2\theta_k \langle  x_k -x_{\lambda_{k}}, z_k- F_{\lambda_k}(x_k)-\bar w_{k,N_k} \rangle \\ &\quad+2\theta_k \langle x_{\lambda_{k}}-x_{\lambda_{k+1}},  z_k- (F_{\lambda_k}(x_k)+\bar w_{k,N_k} )\rangle, 
\end{align*}
where the second equality is obtained by using   $(a+b+c)^2 = a^2 + b^2 + c^2 + 2(ab + bc + ac)$. Next, adding and subtracting  $\bar z_k = \mathbf{P}_{X}\left[ F_{\lambda_k}(x_k) -\eta_{k} x_k\right] $ followed by using the Cauchy-Schwarz inequality and bounding $\|  x_{\lambda_{k}} - x_{\lambda_{k+1}}\|^2$ with Lemma \ref{M bound} then 
\begin{align*}
   & \| x_{k+1} - x_{\lambda_{k+1}}\|^2\\&\leq\| x_{k} - x_{\lambda_{k}}\|^2 + \tfrac{M^2|  \lambda_{k} - \lambda_{k+1}|^2}{\lambda_{k}^2} + \theta_k^2 \|z_k - \bar z_k+\bar z_k- (F_{\lambda_k}(x_k)+\bar w_{k,N_k})\|^2  \\&\quad+2 \tfrac{M|  \lambda_{k} - \lambda_{k+1}|}{\lambda_{k}}\|x_{k}-x_{\lambda_{k}}\| + 2\theta_k \langle  x_k -x_{\lambda_{k}}, \bar z_k- F_{\lambda_k}(x_k)\rangle \\&\quad+   2\theta_k \langle  x_k -x_{\lambda_{k}}, z_k- \bar z_k -\bar w_{k,N_k} \rangle\\&\quad + \tfrac{2\theta_k M|  \lambda_{k} - \lambda_{k+1}|}{\lambda_{k}}\|z_k - \bar z_k+\bar z_k-(F_{\lambda_k}(x_k)+\bar w_{k,N_k})\|\\
    &\leq \|x_{k} - x_{\lambda_{k}}\|^2 + \tfrac{M^2|  \lambda_{k} - \lambda_{k+1}|^2}{\lambda_{k}^2}\\&\quad+ 2\theta_k^2 (\|z_k - \bar z_k-\bar w_{k,N_k}\|^2 +\|\bar z_k-F_{\lambda_k}(x_k)\|^2)\\&\quad +2 \tfrac{M|  \lambda_{k} - \lambda_{k+1}|}{\lambda_{k}}\|x_{k}-x_{\lambda_{k}}\| + 2\theta_k \langle  x_k -x_{\lambda_{k}}, \bar z_k- F_{\lambda_k}(x_k)\rangle\\&\quad+   2\theta_k \langle  x_k -x_{\lambda_{k}}, z_k- \bar z_k -\bar w_{k,N_k} \rangle\\&\quad + \tfrac{2\theta_k M|  \lambda_{k} - \lambda_{k+1}|}{\lambda_{k}}(\|z_k - \bar z_k-\bar w_{k,N_k}\| +\|\bar z_k-F_{\lambda_k}(x_k)\|),
\end{align*}
where the last inequality is obtained by using the triangle inequality, and the fact that $(a+b)^2 \leq 2(a^2+b^2)$.
On the other hand, since condition of Lemma \ref{bound of inner pro} holds, $\lambda_k=(k+1)^{-p}\le 1$, $q\ge p$, and $C_1=(L_1+L_2)^2/\mu$, which imply $\frac{(L_1+\lambda_kL_2)^2}{2\lambda_k\mu}\le \frac{C_1}{2}(k+1)^p\le \frac{C_1}{2}(k+1)^q<\eta_k$, we have: 
\begin{align*}
    \langle  x_k -x_{\lambda_{k}}, \bar z_k- F_{\lambda_k}(x_k)\rangle &\leq -(\lambda_k \mu - \tfrac{(L_1+\lambda_kL_2)^2}{2\eta_k} )\| x_k -x_{\lambda_{k}}\|^2 \\&\quad-\tfrac{1}{2\eta_k}\|\bar  z_k- F_{\lambda_k}(x_k)\|^2.
\end{align*}
Combine these inequalities: 
\begin{align}\label{before youngs}
  \nonumber  &\| x_{k+1} - x_{\lambda_{k+1}}\|^2\\\nonumber&\leq \|x_{k} - x_{\lambda_{k}}\|^2 + \tfrac{M^2|  \lambda_{k} - \lambda_{k+1}|^2}{\lambda_{k}^2}+ 2\theta_k^2 (\|z_k - \bar z_k-\bar w_{k,N_k}\|^2 \\ \nonumber&\quad+\|\bar z_k-F_{\lambda_k}(x_k)\|^2) +2 \tfrac{M|  \lambda_{k} - \lambda_{k+1}|}{\lambda_{k}}\|x_{k}-x_{\lambda_{k}}\| \\ \nonumber&\quad+    2\theta_k \langle  x_k -x_{\lambda_{k}}, z_k- \bar z_k -\bar w_{k,N_k} \rangle-\tfrac{\theta_k}{\eta_k}\|\bar  z_k- F_{\lambda_k}(x_k)\|^2 \\ \nonumber&\quad+ \tfrac{2\theta_k M|  \lambda_{k} - \lambda_{k+1}|}{\lambda_{k}}(\|z_k - \bar z_k-\bar w_{k,N_k}\| +\|\bar z_k-F_{\lambda_k}(x_k)\|)  \\ \nonumber&\quad-2\theta_k (\lambda_k \mu - \tfrac{(L_1+\lambda_kL_2)^2}{2\eta_k} )\| x_k -x_{\lambda_{k}}\|^2 \\
  \nonumber &\leq (1-2\theta_k (\lambda_k \mu - \tfrac{(L_1+\lambda_kL_2)^2}{2\eta_k} ))\|x_{k} - x_{\lambda_{k}}\|^2 + \tfrac{M^2|  \lambda_{k} - \lambda_{k+1}|^2}{\lambda_{k}^2} \\ \nonumber&\quad-\theta_k (\tfrac{1}{\eta_k}-2\theta_k) \|\bar z_k-F_{\lambda_k}(x_k)\|^2 +2 \tfrac{M|  \lambda_{k} - \lambda_{k+1}|}{\lambda_{k}}\|x_{k}-x_{\lambda_{k}}\| \\ \nonumber&\quad+ \tfrac{2\theta_k M|  \lambda_{k} - \lambda_{k+1}|}{\lambda_{k}}\|\bar z_k-F_{\lambda_k}(x_k)\|+    2\theta_k \langle  x_k -x_{\lambda_{k}}, z_k- \bar z_k  \rangle \\ \nonumber&\quad+ \tfrac{4\theta_k M|  \lambda_{k} - \lambda_{k+1}|}{\lambda_{k}}\|\bar w_{k,N_k}\| + 8\theta_k^2 \|\bar w_{k,N_k}\|^2 -2\theta_k \langle  x_k -x_{\lambda_{k}},\bar w_{k,N_k}\rangle\\
  \nonumber&\leq (1-2\theta_k (\lambda_k \mu - \tfrac{(L_1+\lambda_kL_2)^2}{2\eta_k} ))\|x_{k} - x_{\lambda_{k}}\|^2 + \tfrac{M^2|  \lambda_{k} - \lambda_{k+1}|^2}{\lambda_{k}^2}\\ \nonumber &\quad-\theta_k (\tfrac{1}{\eta_k}-2\theta_k) \|\bar z_k-F_{\lambda_k}(x_k)\|^2 +2 \tfrac{M|  \lambda_{k} - \lambda_{k+1}|}{\lambda_{k}}\|x_{k}-x_{\lambda_{k}}\| \\ \nonumber &\quad+ \tfrac{2\theta_k M|  \lambda_{k} - \lambda_{k+1}|}{\lambda_{k}}\|\bar z_k-F_{\lambda_k}(x_k)\|+    2\theta_k \|x_k -x_{\lambda_{k}}\| \|\bar w_{k,N_k} \| \\  &\quad+ \tfrac{4\theta_k M|  \lambda_{k} - \lambda_{k+1}|}{\lambda_{k}}\|\bar w_{k,N_k}\| + 8\theta_k^2 \|\bar w_{k,N_k}\|^2 -2\theta_k \langle  x_k -x_{\lambda_{k}},\bar w_{k,N_k}\rangle,
\end{align}
where the second inequality is followed by the triangle inequality and nonexpansivity of projection, which leads to $\|z_k-\bar z_k - \bar w_{k,N_k}\| \leq \|z_k-\bar z_k\| + \| \bar w_{k,N_k}\| \leq 2 \|\bar w_{k,N_k}\|$. 
Furthermore, implementing Young's inequality $ab \leq \tfrac{a^2}{2\tau}+\tfrac{\tau b^2}{2}$ then we  have 
\begin{align}
    \nonumber  2\tfrac{M|  \lambda_{k} - \lambda_{k+1}|}{\lambda_{k}}\|x_{{k}}-x_{\lambda_{k}}\| &\leq  \tfrac{2M^2|  \lambda_{k} - \lambda_{k+1}|^2}{\lambda_{k}^3 \theta_k \mu} + \tfrac{\theta_k \lambda_k \mu}{2}\| x_{k} - x_{\lambda_{k}}\|^2,
    \\
    \nonumber \tfrac{ M|  \lambda_{k} - \lambda_{k+1}|}{\lambda_{k}}\|\bar z_k- F_{\lambda_k}(x_k)\|  &\leq  \tfrac{ \eta_k M^2|  \lambda_{k} - \lambda_{k+1}|^2}{\lambda_{k}^2} + \tfrac{\|\bar z_k- F_{\lambda_k}(x_k)\|^2}{4\eta_k},
    \\ 
    \nonumber \|x_k -x_{\lambda_{k}}\| \|\bar w_{k,N_k} \| &\leq \tfrac{\beta_k \|x_k -x_{\lambda_{k}}\|^2}{2} + \tfrac{ \|\bar w_{k,N_k} \|^2}{2 \beta_k}, 
\end{align}
with Young's parameters $ \tfrac{\theta_k \lambda_k \mu}{2}$, $ \tfrac{1}{2\eta_k} $ and $\tfrac{1}{\beta_k}$ , respectively.  Combining these bounds in \eqref{before youngs} one can obtain: 
\begin{align*}
        \| x_{k+1} - x_{\lambda_{k+1}}\|^2&\leq (1-2\theta_k (\lambda_k \mu - \tfrac{(L_1+\lambda_kL_2)^2}{2\eta_k} )+\tfrac{\theta_k \lambda_k \mu}{2})\| x_{k} - x_{\lambda_{k}}\|^2 \\
    &\quad+ \tfrac{M^2|  \lambda_{k} - \lambda_{k+1}|^2}{\lambda_{k}^2} - \theta_k(\tfrac{1}{2\eta_k}-2\theta_k)\|\bar z_k- F_{\lambda_k}(x_k)\|^2 \\
    &\quad+\tfrac{2\theta_k \eta_k M^2|  \lambda_{k} - \lambda_{k+1}|^2}{\lambda_{k}^2} +2 \tfrac{M^2|  \lambda_{k} - \lambda_{k+1}|^2}{\lambda_{k}^3 \theta_k \mu}+8\theta_k^2\|\bar w_{k,N_k}\|^2 \\
    &\quad-  2\theta_k \langle  x_k -x_{\lambda_{k}},\bar w_{k,N_k} \rangle + \theta_k\beta_k\|x_k-x_{\lambda_k}\|^2+\tfrac{\theta_k}{\beta_k}\|\bar w_{k,N_k}\|^2\\
    &\quad+\tfrac{4\theta_k M|  \lambda_{k} - \lambda_{k+1}|}{\lambda_{k}}\|\bar w_{k,N_k}\|.
\end{align*} 
 Rearranging terms and using the fact that  $ \tfrac{ 4M|  \lambda_{k} - \lambda_{k+1}|}{\lambda_{k}}\|\theta_k \bar w_{k,N_k}\| \leq2( \tfrac{ M^2|  \lambda_{k} - \lambda_{k+1}|^2}{\lambda_{k}^2} +\theta_k^2 \| \bar w_{k,N_k}\|^2)$ yields
\begin{align*}\nonumber
  \| x_{k+1} - x_{\lambda_{k+1}}\|^2 &\leq (1-\theta_k (\tfrac{3}{2}\lambda_k \mu - \tfrac{(L_1+\lambda_kL_2)^2}{\eta_k}-\beta_k ))\| x_{k} - x_{\lambda_{k}}\|^2 \\\nonumber&\quad- \theta_k(\tfrac{1}{2\eta_k}-2\theta_k)\|\bar z_k- F_{\lambda_k}(x_k)\|^2  + \tfrac{3M^2|  \lambda_{k} - \lambda_{k+1}|^2}{\lambda_{k}^2} \\\nonumber&\quad+\tfrac{2\theta_k \eta_k M^2|  \lambda_{k} - \lambda_{k+1}|^2}{\lambda_{k}^2} +2 \tfrac{M^2|  \lambda_{k} - \lambda_{k+1}|^2}{\lambda_{k}^3 \theta_k \mu}+10\theta_k^2\|\bar w_{k,N_k}\|^2\\
    &\quad -  2\theta_k \langle  x_k -x_{\lambda_{k}},\bar w_{k,N_k} \rangle +\tfrac{\theta_k}{\beta_k}\|\bar w_{k,N_k}\|^2.
\end{align*} 
Choose $\lambda_k = (k+1)^{-p}$, $\eta_k = C_1 (k+1)^{q}$, $\beta_k = \tfrac{\lambda_k \mu}{4}$ , $\theta_k = C_2 (k+1)^{-r}$, where $C_1 =  \frac{(L_1 + L_2)^2}{\mu} $ and $C_2 = \tfrac{1}{8C_1}$.  
Since $0<p\leq q<1$ and $\lambda_k=(k+1)^{-p}$, we have $
\frac{(L_1+\lambda_kL_2)^2}{\eta_k}
= \mu (k+1)^{-q}
\leq \mu (k+1)^{-p}
= \lambda_k\mu.$
Hence,
$1-\frac{5}{4}\theta_k\lambda_k\mu
+\frac{\theta_k(L_1+\lambda_kL_2)^2}{\eta_k}
\leq
1-\frac{1}{4}\theta_k\lambda_k\mu
<1.$
This leads to \eqref{equ for main th0} and \eqref{equ for main th}. 
Taking conditional expectation and using Assumption \ref{assump_error}, we have the desired result.\qed

\section*{Appendix B: Proof of Theorem \ref{main th}}\label{app:proof-thm}
Using \eqref{equ for main th} in Lemma \ref{exp bound}, rearranging terms one can obtain: 
\begin{align*}
   &\theta_k(\tfrac{1}{2\eta_k}-2\theta_k)\|\bar z_k- F_{\lambda_k}(x_k)\|^2  \\&\leq  \| x_{k} - x_{\lambda_{k}}\|^2 -\| x_{k+1} - x_{\lambda_{k+1}}\|^2+ \tfrac{3M^2|  \lambda_{k} - \lambda_{k+1}|^2}{\lambda_{k}^2} +\tfrac{2\theta_k \eta_k M^2|  \lambda_{k} - \lambda_{k+1}|^2}{\lambda_{k}^2} \\&\quad+2 \tfrac{M^2|  \lambda_{k} - \lambda_{k+1}|^2}{\lambda_{k}^3 \theta_k \mu}+10\theta_k^2\|\bar w_{k,N_k}\|^2 \zi{-}  2\theta_k \langle  x_k -x_{\lambda_{k}},\bar w_{k,N_k} \rangle +\tfrac{4\theta_k}{\lambda_k \mu}\|\bar w_{k,N_k}\|^2.
\end{align*} 
 Using $\eta_k = C_1(k+1)^{q}$, $\theta_k = C_2(k+1)^{-r}$, $C_2 = \frac{1}{8C_1}$ and $0 < q \leq r$  we have $1-4\eta_k\theta_k = 1-\tfrac{1}{2(k+1)^{r-q}} \geq 1-\tfrac{1}{2} > 0$ which follows from the fact that $\tfrac{1}{2(k+1)^{r-q} } \leq \tfrac{1}{2}$ for all $k\geq 0$.  Next, dividing both sides by $\theta_k(\tfrac{1}{2\eta_k}-2\theta_k) $ we have: 

\begin{align*}
  \|\bar z_k- F_{\lambda_k}(x_k)\|^2  &\leq \tfrac{2\eta_k}{\theta_k(1-4\theta_k\eta_k)}\big( \| x_{k} - x_{\lambda_{k}}\|^2 -\| x_{k+1} - x_{\lambda_{k+1}}\|^2+ \tfrac{3M^2|  \lambda_{k} - \lambda_{k+1}|^2}{\lambda_{k}^2} \\&\quad+\tfrac{2\theta_k \eta_k M^2|  \lambda_{k} - \lambda_{k+1}|^2}{\lambda_{k}^2} +2 \tfrac{M^2|  \lambda_{k} - \lambda_{k+1}|^2}{\lambda_{k}^3 \theta_k \mu}+10\theta_k^2\|\bar w_{k,N_k}\|^2\\
    &\quad \zi{-}  2\theta_k \langle  x_k -x_{\lambda_{k}},\bar w_{k,N_k} \rangle +\tfrac{4\theta_k}{\lambda_k \mu}\|\bar w_{k,N_k}\|^2\big)
\end{align*} 
Next, bound the left-hand side by defining   $ H_{\lambda_k}(x_k) \triangleq \bar z_k - F_{\lambda_k}(x_k) $  and knowing  $\|H(x,\eta)\|^2 \leq 2\|H_\lambda(x,\eta)\|^2 +{8\lambda^2}\|\Phi(x)\|^2$ : 
\begin{align*}
  \|H(x_k,\eta_k)\|^2   &\leq \tfrac{4\eta_k}{\theta_k(1-4\theta_k\eta_k)}\big( \| x_{k} - x_{\lambda_{k}}\|^2 -\| x_{k+1} - x_{\lambda_{k+1}}\|^2+ \tfrac{3M^2|  \lambda_{k} - \lambda_{k+1}|^2}{\lambda_{k}^2} \\&\quad+\tfrac{2\theta_k \eta_k M^2|  \lambda_{k} - \lambda_{k+1}|^2}{\lambda_{k}^2} +2 \tfrac{M^2|  \lambda_{k} - \lambda_{k+1}|^2}{\lambda_{k}^3 \theta_k \mu}+10\theta_k^2\|\bar w_{k,N_k}\|^2\\
    &\quad \zi{-}  2\theta_k \langle  x_k -x_{\lambda_{k}},\bar w_{k,N_k} \rangle +\tfrac{4\theta_k}{\lambda_k \mu}\|\bar w_{k,N_k}\|^2\big)+{8\lambda_k^2} \|\Phi(x_k)\|^2.
\end{align*} 
Now summing over $k=0, \dots ,T-1$ then we have : 
\begin{align*}
 \sum_{k=0}^{T-1} \|H(x_k,\eta_k)\|^2   &\leq \sum_{k=0}^{T-1}\tfrac{4\eta_k}{\theta_k(1-4\theta_k\eta_k)}\big( \| x_{k} - x_{\lambda_{k}}\|^2 -\| x_{k+1} - x_{\lambda_{k+1}}\|^2\\&\quad+ \tfrac{3M^2|  \lambda_{k} - \lambda_{k+1}|^2}{\lambda_{k}^2} +\tfrac{2\theta_k \eta_k M^2|  \lambda_{k} - \lambda_{k+1}|^2}{\lambda_{k}^2} +2 \tfrac{M^2|  \lambda_{k} - \lambda_{k+1}|^2}{\lambda_{k}^3 \theta_k \mu}\\&\quad+10\theta_k^2\|\bar w_{k,N_k}\|^2 \zi{-}  2\theta_k \langle  x_k -x_{\lambda_{k}},\bar w_{k,N_k} \rangle +\tfrac{4\theta_k}{\lambda_k \mu}\|\bar w_{k,N_k}\|^2\big)\\&\quad+\sum_{k=0}^{T-1}{8\lambda_k^2} \|\Phi(x_k)\|^2.
\end{align*} 
Consider $\Gamma_k= \tfrac{\eta_k}{ \theta_k(1-4\theta_k \eta_k )}$ and take the total expectation $\mathbb{E}[\cdot]$ on both sides. By Assumption \ref{assump_error}, we have $\mathbb E[\bar w_{k,N_k}\mid \mathcal F_k]=0$ and $\mathbb E[\|\bar w_{k,N_k}\|^2]\leq \tfrac{\nu^2}{N_k}$. Thus, we obtain
\begin{align*}
\sum_{k=0}^{T-1} \mathbb{E}[\|H(x_k,\eta_k)\|^2] 
&\leq \mathbb{E}\left[\sum_{k=0}^{T-1}4\Gamma_k\left( \| x_{k} - x_{\lambda_{k}}\|^2 -\| x_{k+1} - x_{\lambda_{k+1}}\|^2\right)\right] \\&\quad+\sum_{k=0}^{T-1} 8\lambda_k^2 \mathbb{E}[\|\Phi(x_k)\|^2] \\&\quad+\sum_{k=0}^{T-1} \tfrac{4\Gamma_k M^2 |\lambda_k - \lambda_{k+1}|^2}{\lambda_k^2} \left( 3 + 2\theta_k \eta_k + \tfrac{2}{\lambda_k \theta_k \mu} \right) \\&\quad+ \sum_{k=0}^{T-1} \tfrac{4\Gamma_k \nu^2 }{ N_k} \left( 10\theta_k^2 + \tfrac{4\theta_k}{\lambda_k \mu} \right).
\end{align*}
Since $\Phi$ is Lipschitz continuous, we have
$\|\Phi(x_k)\|^2 \leq 2\|\Phi(0)\|^2 + 2L_2^2\|x_k\|^2$.
Moreover, $\{x_{\lambda_k}\}$ is bounded and 
$\sup_{k\geq 0}\mathbb{E}[\|x_k-x_{\lambda_k}\|^2]<\infty$, which implies
$\sup_{k\geq 0}\mathbb{E}[\|x_k\|^2]<\infty$.
Therefore, there exists a deterministic constant $C>0$ such that
$\sup_{k\geq 0}\mathbb{E}[\|\Phi(x_k)\|^2]\leq C$.
Therefore,
$\sum_{k=0}^{T-1} 8\lambda_k^2 \mathbb{E}[\|\Phi(x_k)\|^2]\leq\sum_{k=0}^{T-1} 8\lambda_k^2 C.$

On the other hand, 
\begin{align*} &\mathbb{E}[\sum_{k=0}^{T-1}\Gamma_k\big(\| x_{k} - x_{\lambda_{k}}\|^2 - \| x_{k+1} - x_{\lambda_{k+1}}\|^2 \big)] \\&= \mathbb{E}[\Gamma_0 \|x_0 - x_{\lambda_0}\|^2 - \Gamma_{T-1} \|x_T - x_{\lambda_T}\|^2 + \sum_{k=1}^{T-1} (\Gamma_k - \Gamma_{k-1}) \|x_k - x_{\lambda_k}\|^2 \ ]\\&\leq \mathbb{E}[\Gamma_0 \|x_0 - x_{\lambda_0}\|^2 + \sum_{k=1}^{T-1} (\Gamma_k - \Gamma_{k-1}) \|x_k - x_{\lambda_k}\|^2]\\
&\leq D^2 \Gamma_{T-1},\end{align*}

where the last inequality follows from the fact that $\Gamma_k$ is a non-decreasing sequence, followed by  Lemma \ref{bound x_k} which shows there exists a constant $D > 0$ such that $\mathbb{E}[\|x_k - x_{\lambda_k}\|^2] \leq D^2$ for all $k$ , where $N_k =\lceil(k+1)^{1+\delta}\rceil$. 
So, we have:
\begin{align*}
\sum_{k=0}^{T-1} \mathbb{E}[\|H(x_k,\eta_k)\|^2] &\leq \sum_{k=0}^{T-1} \tfrac{4\Gamma_k M^2 |\lambda_k - \lambda_{k+1}|^2}{\lambda_k^2} \left( 3 + 2\theta_k \eta_k + \tfrac{2}{\lambda_k \theta_k \mu} \right) \
 \\ &\quad+ \sum_{k=0}^{T-1} \tfrac{4\Gamma_k \nu^2 }{ N_k} \left( 10\theta_k^2 + \tfrac{4\theta_k}{\lambda_k \mu} \right) +4\Gamma_{T-1} D^2+ \sum_{k=0}^{T-1} 8\lambda_k^2 C.
\end{align*}

 Based on the chosen sequences $\lambda_k = (k+1)^{-p}$, $\eta_k = C_1(k+1)^{q}$, and $\theta_k = C_2(k+1)^{-r}$, where $C_1 = \frac{(L_1 + L_2)^2}{\mu}$ and $C_2 = \frac{1}{8C_1}$, we observe that $4\theta_k \eta_k = \frac{1}{2}(k+1)^{q-r} \leq \frac{1}{2}$, which implies $\Gamma_k = \frac{\eta_k}{\theta_k(1-4\theta_k \eta_k)} \leq 2\frac{\eta_k}{\theta_k} = 16C_1^2(k+1)^{q+r}$.
Next, using Lemma \ref{MVT} with $h(k) = (k+1)^{-p}$ to bound $|\lambda_k - \lambda_{k+1}|^2$ one can obtain $|\lambda_k - \lambda_{k+1}|^2 \leq p^2(k+1)^{-2-2p}.$
Using in the previous inequality, yields: 
\begin{align*}
&\sum_{k=0}^{T-1} \mathbb{E}[\|H(x_k,\eta_k)\|^2] \\&\leq (8 p C_1 M)^2 \sum_{k=0}^{T-1} (k+1)^{-2+r+q} \left( \tfrac{16 C_1 (k+1)^{p+r}}{\mu} + \tfrac{(k+1)^{q-r}}{4} + 3 \right) \\&\quad
 + 8C_1\nu^2  \sum_{k=0}^{T-1} \tfrac{(k+1)^{r+q}}{N_k} \left( 10 C_2 (k+1)^{-2r} + \tfrac{4 (k+1)^{-r+p}}{\mu} \right) \\
&\quad + \sum_{k=0}^{T-1} 8(k+1)^{-2p} C + 64 C_1^2 D^2 T^{q+r}.
\end{align*}
Now by considering $N_k = \lceil(k+1)^{1+\delta}\rceil$ and providing a lower-bound for the left-hand-side of the above inequality by $\min_k \mathbb E[\|H(x_k,\eta_k)\|^2] $ then we have

\begin{align*}
\nonumber &T\min_k \mathbb E[\|H(x_k,\eta_k)\|^2] \\& \leq (8 p C_1 M)^2 \sum_{k=0}^{T-1} (k+1)^{-2+r+q} \left( \tfrac{16 C_1 (k+1)^{p+r}}{\mu} + \tfrac{(k+1)^{q-r}}{4} + 3 \right) \\&\nonumber\quad
 + 8C_1\nu^2  \sum_{k=0}^{T-1} {(k+1)^{r+q-1-\delta}} \left( 10 C_2 (k+1)^{-2r} + \tfrac{4 (k+1)^{-r+p}}{\mu} \right) \\
&\quad + \sum_{k=0}^{T-1} 8(k+1)^{-2p} C + 64 C_1^2 D^2 T^{q+r}.
\end{align*}
Finally, dividing both sides by $T$ yields the desired result.
\qed

\bibliographystyle{siam}
\bibliography{bilbo}
\end{document}